\allowdisplaybreaks \numberwithin{equation}{section}
\numberwithin{equation}{section}
\newtheorem{theorem}{Theorem}[section]
\newtheorem{proposition}[theorem]{Proposition}
\newtheorem{lemma}[theorem]{Lemma}
\theoremstyle{definition}
\newenvironment{customthm}[1]
{\innercustomthm}
{\endinnercustomthm}
\newtheorem{definition}[theorem]{Definition}
\theoremstyle{remark}
\newtheorem{remark}[theorem]{Remark}
\begin{document}

\title[Expansion of Green's function for elliptic operators in divergence form]
{Expansion of Green's function and regularity of Robin's function for elliptic operators in divergence form}

\author{Daomin Cao and Jie Wan}
	
\address{Institute of Applied Mathematics, Chinese Academy of Sciences, Beijing 100190, and University of Chinese Academy of Sciences, Beijing 100049,  P.R. China}
\email{dmcao@amt.ac.cn}
\address{School of Mathematics and Statistics, Beijing Institute of Technology, Beijing 100081,  P.R. China}
\email{wanjie@bit.edu.cn}

\begin{abstract}
We consider Green's function $ G_K $ of the elliptic operator in divergence form $ \mathcal{L}_K=-\text{div}(K(x)\nabla  ) $ on a
bounded smooth domain $ \Omega\subseteq\mathbb{R}^n (n\geq 2) $ with  zero Dirichlet boundary condition, where $ K $ is a smooth positively definite matrix-valued function on $ \Omega $. We obtain a high-order asymptotic expansion of $ G_K(x, y) $, which defines uniquely a regular  part $ H_K(x, y) $. Moreover, we prove that the associated Robin's function $ R_K(x) = H_K(x, x) $ is smooth in $ \Omega $, despite the regular part $ H_K\notin C^1(\Omega\times\Omega)  $ in general.\\

\noindent\textbf{Keywords:} Second-order elliptic operators in divergence form;  Green's function;  Robin's function.

\end{abstract}
	
\maketitle

\section{Introduction and main results}
Our purpose in this article is to study the expansion of Green's function and regularity of Robin's function for second-order elliptic operators in divergence form in all dimensions. Let $ \Omega\in\mathbb{R}^n $ be a bounded smooth domain with $ n\geq 2 $. Consider the following elliptic operator:
\begin{equation*}
\mathcal{L}_Ku=-\text{div}(K(x)\nabla u),
\end{equation*}
where $ K =(K_{i,j})_{2\times2} $ is a positively definite matrix satisfying
\begin{enumerate}
	\item[($\mathcal{K}$1).] $ K_{i,j} \in C^{\infty}(\overline{\Omega}) $ for $ 1\le i,j\le 2; $
	\item[($\mathcal{K}$2).] There exist constants $ \Lambda_1,\Lambda_2>0 $ such that $$ \Lambda_1|\zeta|^2\le (K(x)\zeta|\zeta) \le \Lambda_2|\zeta|^2,\ \ \ \ \forall\ x\in \Omega, \ \zeta\in \mathbb{R}^2.$$
\end{enumerate}
Let $ G_K(x,y) $ be the Green's function of $ \mathcal{L}_K $ associated to zero Dirichlet boundary condition, i.e., for any $ y\in\Omega$,
\begin{equation}\label{Def of Green1}
\begin{cases}
\mathcal{L}_KG_K(x,y)=\delta_y\ \ &\text{in}\ \Omega,\\
G_K(x,y)=0\ \ &\text{on}\ \partial\Omega.
\end{cases}
\end{equation}
Here $ \delta_y $ is the Dirac measure centered at $ y $. Multiplying test functions on both sides of the first equation of \eqref{Def of Green1} and integrating on $ \Omega $, we get the equivalent  characterization of Green's function in integral form, that is, for any $ u\in C^{2}(\Omega)\cap C^1_0(\overline{\Omega}) $
\begin{equation}\label{def of Green 2}
u(y)=\int_\Omega G_K(x,y)\mathcal{L}_Ku(x)dx,\ \ \forall y\in\Omega.
\end{equation}
Here $ C^{2}(\Omega)\cap C^1_0(\overline{\Omega}) $ is the space of all functions which are twice continuously differentiable and equal 0 on the boundary $ \partial\Omega $. The well-posedness of Green's function $ G_K $ defined by \eqref{Def of Green1} is considered in many articles. When the dimension $ n\geq 3 $, it is well known that Green's function of $ \mathcal{L}_K $ exists and is unique, see \cite{LSW63}.   Similar results is obtained in \cite{KN85} when the dimension  $ n=2 $. More results can be found in \cite{CL,DK1, GT,GW82,MM,TKB13} for instance.

The asymptotic expansion of Green's function of $ \mathcal{L}_K $, meanwhile, 
has also been widely concerned in recent decades. The understanding of the expansion of Green's function plays an essential role in many fields, especially in the study of concentration phenomena for some fluid mechanics models and semilinear elliptic equations, see, e.g., \cite{BF80,CF,CPY,De2,FB,R,WYZ,T}. For many concentration phenomena, the location of possible singularities of solutions is always determined by the corresponding Green's function, and the solutions are always perturbations of Green's function.  
A typical example is the  construction of concentrated vortex solutions to two dimensional, three dimensional axisymmetric and three dimensional helical symmetric incompressible Euler equations. For  planar Euler equations, one always gets concentrated vortex solutions by solving an elliptic problem
\begin{equation*}
-\Delta u=\lambda f(u)\ \ \text{in}\ \Omega;\ \ u=0\ \ \text{on}\ \partial\Omega,
\end{equation*}
and proving the asymptotic behavior of solutions $ u_\lambda $ as $ \lambda $ tends to infinity, see e.g., \cite{CF, CPY,DDMW,Nor,SV}. The limiting location is determined by the Kirchhoff-Routh function (see \cite{Lin}), which is a combination of Green's function of $ -\Delta $ and the  corresponding Robin's function. For the construction of concentrated vortex solutions to 3D axisymmetric Euler equations (called vortex rings) and 3D Euler equations with helical symmetry, corresponding elliptic operators appeared in elliptic problems are   $ -\frac{1}{a(x)}\text{div}(a(x)\nabla) $ and $ -\text{div}(K_H(x)\nabla) $ for some positive function $ a $ and positively definite matrix $ K_H $ respectively, see e.g., \cite{CW2,CW,De2,DV,FB}. When constructing a family of vortex solutions concentrating near several points, one always need to use the asymptotic expansion of the corresponding Green's function for these elliptic operators. Note that the   $ C^1 $ regularity of the corresponding Robin's function is necessary when constructing concentrated solutions by  perturbation arguments, see \cite{CPY,CW3,WYZ} for example.

The most common case is $ K\equiv Id_{n\times n} $, i.e., the  identity matrix of order $ n $, and then  the operator $\mathcal{L}_K $ is the standard Laplacian. In this case the associated Green's function $ G_\Omega $ of $ -\Delta $ in a domain $ \Omega $ with Dirichlet condition is of the decomposition (see \cite{GT})
\begin{equation}\label{Green of Laplacian}
G_\Omega(x,y)=\Phi_0(x-y)-H_{\Omega}(x,y),
\end{equation}
where $ \Phi_0 $ is the fundamental solution of $ -\Delta $ defined by
\begin{equation}\label{def of phi0}
\Phi_0(x)=\begin{cases}
-\frac{1}{2\pi}\log |x|,\ \ &\text{for}\ n=2;\\
\frac{1}{n(n-2)w_n}|x|^{2-n},\ \ &\text{for}\ n\geq 3;
\end{cases}
\end{equation}
with $ w_n $ the Lebesgue measure of the unit ball in $ \mathbb{R}^n $, and $ H_{\Omega} $ is the regular part of $ G_\Omega $ satisfying for any $ y\in\Omega $
\begin{equation}\label{def of H}
\begin{cases}
-\Delta H_{\Omega}(x,y)=0,\ \  &x\in\Omega,\\
H_{\Omega}(x,y)=\Phi_0(x-y),\ \ &x\in\partial\Omega.
\end{cases}
\end{equation}
Clearly $ H_{\Omega} $ is a function determined by $ \Omega $. Moreover, $ H_{\Omega} $ is smooth and symmetric in $ \Omega\times\Omega $, i.e.,  $ H_{\Omega}(x,y)=H_{\Omega}(y,x) $. Thus $ R_\Omega(x)=H_{\Omega}(x,x) $, called the Robin's function, is smooth in $ \Omega. $

When $ K$  is a positively definite constant coefficient matrix $ K_0 $,  we can also get the expansion of associated Green's function.
Let $ T_0 $ be the  positively definite matrix satisfying
$$ T_0^{-1}T_0^{-t}=K_0, $$
where $ T_0^{-t} $ is the transpose of $ T_0^{-1}. $
Denote $ T_0\Omega=\{T_0x\mid x\in\Omega\} $.
Using the coordinate transformation (see Lemma \ref{lemA-1} in Appendix), we find that Green's function $ G_{K_0} $ of $ \mathcal{L}_{K_0} $ has the decomposition
 \begin{equation*}\label{Green of K0}
 G_{K_0}(x,y)=\sqrt{\det K_0}^{-1}\Phi_0(T_0(x-y))-\sqrt{\det K_0}^{-1}H_{T_0\Omega}(T_0x,T_0y)\ \ \forall x,y\in\Omega, x\neq y,
 \end{equation*}
where $ H_{T_0\Omega}: T_0\Omega\times T_0\Omega\to\mathbb{R} $   satisfies for any $ y'\in T_0\Omega $
\begin{equation*}
\begin{cases}
-\Delta H_{T_0\Omega}(x',y')=0,\ \  &x'\in T_0\Omega,\\
H_{T_0\Omega}(x',y')=\Phi_0(x'-y'),\ \ &x'\in\partial T_0\Omega.
\end{cases}
\end{equation*}
Clearly the corresponding Robin's function
 $ R_{T_0\Omega}(x)=\sqrt{\det K_0}^{-1}H_{T_0\Omega}(T_0x,T_0x) $  is also smooth in $ \Omega. $

 When $ K $ is a matrix-valued function rather than a constant coefficient matrix, the situation turns out to be less clear.  If $ K $ is a diagonal matrix with the same diagonal elements, i.e., $ K(\cdot)=a(\cdot)Id_{n\times n} $ for some positive smooth function $ a $, \cite{Ye} constructed the expansion  of Green's function and proved the smoothness of the corresponding Robin's function. For any $ l\in \mathbb{N} $, it was proved in \cite{Ye} that Green's function $ G_a $ of $ -\Delta_a=-\frac{1}{a(x)}\text{div}(a(x)\nabla) $ has the high-order expansion
\begin{equation*}
G_a(x,y)=\Phi_0(x-y)+ \sum_{k=1}^{n+l-2}\Phi_k(x-y)+H^l_a(x,y)\ \ \text{in}\ \overline{\Omega}\times\Omega,
\end{equation*}
where $ H_a^l(x,y)\in C^{l,\gamma}(\overline{\Omega}\times\Omega) $, see also \cite{WYZ}. 
An interesting phenomena is,  for any $ n\geq2 $ if $ \nabla a(y)\neq 0, $ then $ x\mapsto H^0_a(x,y) $ does not belong to $ H^2(\Omega) $ since $   -\Delta_aH^0_a(x,y)\notin L^2(\Omega) $. However, $ H^0_a(x,x) $ is smooth in $ \Omega $, see Proposition 2.7 in \cite{Ye}.
For the case $ K $ being a general positively definite matrix-valued function, 
it follows from \cite{CW}   that Green's function  $ G_K $ has the following structure when $ n=2 $ (see \cite{CW}, theorem 1.2)
\begin{equation}\label{1001}
G_K(x, y)=\frac{\sqrt{\det K(x)}^{-1}+\sqrt{\det K(y)}^{-1}}{2}\Phi_0\left (\frac{T_x+T_y}{2}(x-y)\right )+S_K(x,y),
\end{equation}
where $ S_K\in C^{0,\gamma}(\Omega\times \Omega) $ for all $ \gamma\in(0,1) $ and $ S_K(x,y)=S_K(y,x) $ for  $ x,y \in \Omega. $ 
See also \cite{CW2}. This implies that the  regular part $ S_K $ of $ G_K $  is  just $ C^{0,\gamma} $, not smooth.
 Note that \eqref{1001} does not give us a high-order expansion of Green's function $ G_K(x, y) $, nor  
 proves whether the corresponding Robin's function $ S_K(x,x) $ is smooth. For the dimension $ n\geq 3 $, 
such results are also unknown for our knowledge. In this article, we intend to study this aspect. For the cases that dimension $ n\geq 2 $, we construct a high-order asymptotic expansion of Green's function $ G_K $ of \eqref{Def of Green1}, which permits us to define uniquely a regular part $ H_K $.
Then we prove the smoothness of the corresponding Robin's function $ R_K(x)=H_K(x,x) $. We find that $ R_K\in C^{\infty}(\Omega) $, although $ H_K $ does not belong  to $ C^{1}(\Omega\times \Omega) $ in general. Note that  the fundamental solution of $ -\Delta $ for $ n=2  $ has a logarithmic term,  which is different from that of  $ n\geq 3 $. Therefore, we get results by dealing with the odd and even cases of $ n $ separately.



Before stating our main results, let us first introduce some notations. Let $ n\geq 2 $ and
define $ T_x $  a $ C^\infty $ positively definite matrix-valued function determined by $ K $ satisfying
\begin{equation}\label{matrix T1}
T_x^{-1}(T_x^{-1})^{t}=K(x)\ \ \ \ \forall x\in \Omega.
\end{equation}
Clearly 
such $ T_x $ exists and is unique. For any multi-index $ \alpha=(\alpha_i)\in \mathbb{N}^n$ and $ x\in\mathbb{R}^n $, we denote $ |\alpha|=\sum_{i=1}^{n}\alpha_i $ and $ x^\alpha=\Pi_{i=1}^nx_i^{\alpha_i} $.  We define a linear space $ E^{n+2m}_{k+2m} $ as follows
\begin{definition}\label{def of E}
	Given $ k\in\mathbb{N}^*, m\in \mathbb{N}$, denote
	\begin{equation*}
	E^{n+2m}_{k+2m}=\text{span}\left \{\frac{x^\alpha}{r^{n+2m}}\mid |\alpha|=k+2m, \alpha\in \mathbb{N}^n\right \},
	\end{equation*}
	where $ r=||x|| $ is the classical Euclidean norm of $ x $.
	
\end{definition}

Our first result is on the expansion of Green's function of \eqref{Def of Green1} when $ n $ is odd.
\begin{theorem}\label{thm1}
Let $ n\geq 3 $ be odd and $ \gamma\in(0,1) $ be an arbitrary constant. Then for any $ l\in\mathbb{N} $, there exists a unique $ \Phi_i\in E^{n+2(2i-1)}_{i+2+2(2i-1)}$ for $  i=1,\cdots, n+l-2 $ depending on $ y\in\Omega $ and $ H^l(x,y)=H^l_y(x)\in C\left (\Omega, C^{l,\gamma}\left (\overline{\Omega}\right )\right )\cap C^{l}(\Omega\times\Omega)$ such that
	\begin{equation*}
	G_K(x,y)=\sqrt{\det K(y)}^{-1}\Phi_0\left (T_y(x-y)\right )+\sum_{i=1}^{n+l-2}\Phi_i\left (T_y(x-y)\right )+H^l(x,y)\ \ \text{in}\ \overline{\Omega}\times\Omega.
	\end{equation*}
\end{theorem}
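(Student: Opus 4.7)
My plan is to use a parametrix construction. I would set
\[
P^{l}(x,y):=\sqrt{\det K(y)}^{-1}\Phi_{0}(T_{y}(x-y))+\sum_{i=1}^{n+l-2}\Phi_{i}(T_{y}(x-y))
\]
and choose the $\Phi_{i}$'s inductively so that $\mathcal{L}_{K}^{x}P^{l}(\cdot,y)-\delta_{y}$ extends from $\Omega\setminus\{y\}$ to a function in $C^{l-2,\gamma}(\overline{\Omega})$, with norms uniform for $y$ in compacts. Then $H^{l}:=G_{K}-P^{l}$ satisfies an elliptic equation whose right-hand side is in $C^{l-2,\gamma}(\overline{\Omega})$ and whose Dirichlet boundary values $-P^{l}(\cdot,y)\big|_{\partial\Omega}$ are smooth in $x$ (because $y\in\Omega$ is interior, the singularity of $P^l$ is away from $\partial\Omega$). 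Interior and boundary Schauder estimates then give $H_{y}^{l}\in C^{l,\gamma}(\overline{\Omega})$ continuously in $y$, and joint $C^{l}(\Omega\times\Omega)$ regularity follows by differentiating the defining equation in $y$ and iterating the Schauder argument, using smoothness of $y\mapsto T_{y}$ and $y\mapsto\det K(y)$.

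\textbf{Matching homogeneities.} Set $z=T_{y}(x-y)$. The identity $T_{y}^{-1}(T_{y}^{-1})^{t}=K(y)$ shows that the frozen-coefficient operator $-K_{ij}(y)\partial_{i}\partial_{j}$ pulls back to $-\Delta_{z}$, and the Jacobian factor $\det T_{y}=\sqrt{\det K(y)}^{-1}$ provides exactly the normalization ensuring that $\sqrt{\det K(y)}^{-1}\Phi_{0}(T_{y}(x-y))$ contributes $\delta_{y}$ under $\mathcal{L}_{K}^{x}$. Splitting $\mathcal{L}_{K}=-K_{ij}(x)\partial_{i}\partial_{j}-(\partial_{i}K_{ij})(x)\partial_{j}$ and Taylor-expanding both $K_{ij}(x)-K_{ij}(y)$ and $\partial_{i}K_{ij}(x)$ around $y$, every error produced by the leading $\Phi_{0}$-piece and by the already-placed $\Phi_{1},\ldots,\Phi_{i-1}$ has, after pulling back by $z=T_{y}(x-y)$, the form (polynomial in $z$) $\times$ (partial derivative of $|z|^{2-n}$), i.e., is a finite linear combination of rational functions $z^{\alpha}/|z|^{n+2m}$ belonging to the graded pieces $E^{n+2m}_{k+2m}$. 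To cancel the collected contributions of homogeneity $i-n$ at stage $i$, one solves a Poisson-type algebraic equation on $E^{n+2(2i-1)}_{i+2+2(2i-1)}$; the specific indices $k=i+2$, $m=2i-1$ arise from bookkeeping of how many Taylor factors from $K$ and $T_{y}$-insertions have accumulated. Solvability becomes a finite-dimensional linear problem resting on surjectivity of $-\Delta_{z}$ between the appropriate graded pieces. After $i$ reaches $n+l-2$, the remaining singular error has homogeneity at least $l-2$ and therefore lies in $C^{l-2,\gamma}(\overline{\Omega})$.

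\textbf{Uniqueness and main obstacle.} Uniqueness of the $\Phi_{i}$'s follows by comparing the distinct homogeneity orders $i+2-n$: two admissible decompositions would differ by a sum of homogeneous rational functions of the prescribed forms whose total is smoother than any single piece, forcing each to vanish by matching the leading singularity, then the next, and so on. The principal technical difficulty is the algebraic bookkeeping in the inductive step — showing that the residual at stage $i$ actually belongs to the space $E^{n+2(2i-1)}_{i+2+2(2i-1)}$ claimed, and that the Laplacian surjects onto the relevant graded pieces so that $\Phi_{i}$ can be chosen. The hypothesis that $n$ is odd is essential here, because for even $n$ logarithmic corrections would be forced into the expansion at specific orders; such terms do not lie in the $E^{n+2m}_{k+2m}$ scale, and the even case therefore requires a structurally different Ansatz and is handled separately in the paper.
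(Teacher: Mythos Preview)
Your proposal is correct and follows essentially the same parametrix strategy as the paper: build explicit singular terms in the $E^{n+2m}_{k+2m}(T_{y})$ spaces using surjectivity of $-\Delta$ on these graded pieces (which is exactly where the hypothesis $n$ odd enters), then obtain the regularity of the remainder $H^{l}$ by elliptic theory and differentiation in $y$. The paper organizes the construction slightly differently --- it first solves an auxiliary Dirichlet problem for a correction $\zeta_{y}$ on a large ball $B_{M}(0)$ and invokes a structure proposition (its Proposition~2.6) to decompose $\zeta_{y}$ into the claimed graded pieces, concluding with Weyl's lemma for the leftover $\eta_{y}=G_{K}-\hat{\Phi}_{0}-\zeta_{y}$ rather than Schauder --- but the underlying ideas and the key algebraic lemma are the same as yours.
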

\begin{remark}
Note that from Theorem \ref{thm1}, each term in the expansion of Green's function $ G_K $  is determined by $ \Phi_k\left (T_y(x-y)\right ) $, which is different from the term  $ \Phi_k(x-y) $ in the expansion of Green's function of $ -\Delta $ and $ -\Delta_a=-\frac{1}{a(x)}\text{div}\left (a(x)\nabla\right ) $, see \cite{GT,Ye}. However, if we choose $ K $ to be $Id_{n\times n}  $ and $ a Id_{n\times n} $, the expansion of $ G_K $ in Theorem \ref{thm1} coincides with the classical results in \cite{GT,Ye}.
\end{remark}
Concerning the cases with even dimension, since the fundamental solution of $ -\Delta $ in dimension two is $ -\frac{1}{2\pi}\log|x| $, we need to introduce some notations about singular and logarithmic terms.
Let $ n\in 2\mathbb{N}^*, k\in\mathbb{N}^*,m \in\mathbb{N}  $ and $ E^{n+2m}_{k+2m} $ be defined as in Definition \ref{def of E}. Let $ \mathbb{R}[x] $ be the set of real polynomials with variables $ x_i $. When $ k\geq n, $ we denote by $ E^{n+2m, s}_{k+2m} $ the singular set of $ E^{n+2m}_{k+2m} $, i.e., $ E^{n+2m,s}_{k+2m}= E^{n+2m}_{k+2m}\backslash \mathbb{R}[x] $.
 Denote
$$ L_m=\text{span}\{x^\alpha\log r\mid |\alpha|=m, \alpha\in\mathbb{N}^n\}. $$
We define $ F^{n+2m}_{k+2m} $ as
	\begin{equation*}
	F^{n+2m}_{k+2m}=
	\begin{cases}
	E^{n+2m}_{k+2m}\ &k < n,\\
	E^{n+2m,s}_{k+2m}\oplus L_{k-n}\ &k\geq  n.
	\end{cases}
	\end{equation*}
	
When the dimension $ n $ is even we have the following result concerning the expansion of Green's function.
\begin{theorem}\label{thm2}
	Let $ n\in 2\mathbb{N}^* $ and $ \gamma\in(0,1) $ be an arbitrary constant. Then for any $ l\in\mathbb{N} $, there exists a unique $ \Phi_i\in F^{n+2(2i-1)}_{i+2+2(2i-1)}$ for $  i=1,\cdots, n+l-2 $ depending on $ y\in\Omega $ and $ H^l(x,y)=H^l_y(x)\in C\left (\Omega, C^{l,\gamma}\left (\overline{\Omega}\right )\right )\cap C^{l}(\Omega\times\Omega)$ such that
	\begin{equation*}
	G_K(x,y)=\sqrt{\det K(y)}^{-1}\Phi_0\left (T_y(x-y)\right )+\sum_{i=1}^{n+l-2}\Phi_i\left (T_y(x-y)\right )+H^l(x,y)\ \ \text{in}\ \overline{\Omega}\times\Omega.
	\end{equation*}
\end{theorem}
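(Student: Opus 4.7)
The plan is to mirror the iterative scheme used for Theorem~\ref{thm1}, inserting logarithmic corrections at the resonant step where they first become unavoidable. Fix $y\in\Omega$, set $z=T_y(x-y)$, and apply Lemma~\ref{lemA-1}: in the $z$-coordinate the operator $\mathcal{L}_K$ becomes $-\Delta_z$ plus a perturbation whose coefficients vanish at $z=0$, and the leading term $\sqrt{\det K(y)}^{-1}\Phi_0(z)$ produces exactly $\delta_y$ to highest singular order. Taylor-expanding the entries of $K$ at $y$ writes the perturbation as $\sum_{j\geq 1}L_j$, where $L_j$ is a second-order operator whose coefficients are polynomials in $z$ of degree~$j$.

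Next I would define $\Phi_i$ for $i=1,2,\ldots$ inductively by requiring that the partial sum $P_i(x,y)=\sqrt{\det K(y)}^{-1}\Phi_0(z)+\sum_{j=1}^{i}\Phi_j(z)$ makes the residual $\mathcal{L}_K P_i-\delta_y$ one homogeneity degree less singular at each step. Each step reduces to inverting $-\Delta_z$ on a homogeneous right-hand side of degree $i+1-n$ (together with lower-order contributions coming from the $L_j$). When $i+2<n$ the inversion stays inside the rational singular class $E^{n+2m}_{k+2m}$; when $i+2\geq n$, resonance with the fundamental solution forces logarithmic corrections $x^\alpha\log r\in L_{k-n}$, which is precisely the enlargement encoded in $F^{n+2m}_{k+2m}$. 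By induction these spaces are closed under the iteration, and $\Phi_i$ depends smoothly on $y$ through the Taylor coefficients of $K$ at $y$.

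Stopping the iteration at $i=n+l-2$ gives a residual $R_l:=\mathcal{L}_K P_l-\delta_y$ belonging to $C^{l-2,\gamma}(\overline{\Omega})$ uniformly for $y$ in compact subsets of $\Omega$. Setting $H^l(\cdot,y):=G_K(\cdot,y)-P_l(\cdot,y)$, this function solves $\mathcal{L}_K H^l(\cdot,y)=-R_l(\cdot,y)$ on $\Omega$ with smooth Dirichlet data $-P_l(\cdot,y)|_{\partial\Omega}$, so Schauder theory yields $H^l\in C(\Omega, C^{l,\gamma}(\overline{\Omega}))$; differentiating the equation in $y$ and iterating then gives the joint $C^l$ regularity. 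Uniqueness follows by a homogeneity argument: the difference of two admissible expansions has a $C^{l,\gamma}$ remainder and a singular part that is a finite sum of terms of distinct homogeneity degrees $i+2-n$ (possibly multiplied by $\log r$), so matching orders forces each $\Phi_i-\tilde{\Phi}_i=0$.

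The main obstacle is the resonant step, at which the logarithmic terms first appear. One must verify algebraically that $F^{n+2m}_{k+2m}$ is genuinely closed under the iteration: that $-\Delta_z$ acting on the log-polynomial part $L_{k-n}$, together with the cross-couplings produced by the lower-order $L_j$, remains inside the next $F$-space and never generates $\log^2 r$ factors or singular terms outside the hierarchy. The identity $\Delta_z(x^\alpha\log r)=(\Delta_z x^\alpha)\log r+(\text{singular rational})$ and the way the polynomial-coefficient operators $L_j$ mix the rational and logarithmic summands are the technical heart of the even-dimensional case and are the features that distinguish this proof from that of Theorem~\ref{thm1}.
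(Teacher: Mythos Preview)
Your proposal is correct and takes essentially the same route as the paper: an iterative construction of the $\Phi_i$ by inverting the frozen-coefficient operator at $y$ on successive residuals, with the key algebraic input being that the $F$-hierarchy (singular rational part plus the log-polynomial span $L_{k-n}$) is closed under this process and no $\log^2 r$ terms are ever produced---precisely what the paper isolates as Lemmas~\ref{le2-1}--\ref{le2-2} and then packages into Proposition~\ref{le2-4}. The paper does not change variables to $z=T_y(x-y)$ but instead works in the original coordinates via the twisted spaces $F^{n+2m}_{k+2m}(T_y)$ and an auxiliary Dirichlet problem on a large ball $B_M(0)\supset T_w(\Omega_w)$ (so that Weyl's lemma finishes the smoothness of the remainder on $\Omega$); this and the Neumann-series organisation of the iteration in Proposition~\ref{le2-4} are presentational rather than substantive differences from your degree-by-degree scheme.
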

\begin{remark}
 Compare with Theorem \ref{thm1},  the only difference is $ \Phi_i\in F^{n+2(2i-1)}_{i+2+2(2i-1)}$, rather than $ E^{n+2(2i-1)}_{i+2+2(2i-1)} $. Note also that when  taking $ n=2 $ and $ l=0 $, Theorem \ref{thm2} yields that
\begin{equation*}
G_K(x,y)=\sqrt{\det K(y)}^{-1}\Phi_0(T_y(x-y))+H^0(x,y),
\end{equation*}
where $ H^0(x,y)\in C^{0,\gamma}(\overline{\Omega}\times\Omega) $. This expansion coincides with   \eqref{1001} in \cite{CW}, since in this case $ S_K(x,y)=H^0(x,y)+\sqrt{\det K(y)}^{-1}\Phi_0(T_y(x-y))-\frac{\sqrt{\det K(x)}^{-1}+\sqrt{\det K(y)}^{-1}}{2}\Phi_0\left (\frac{T_x+T_y}{2}(x-y)\right )$ belongs to $ C^{0,\gamma} (\Omega\times\Omega) $.
\end{remark}

Our last result is on the smoothness of Robin's function, regardless of the parity of $ n $. Let us define the regular part of $ G_K $ and the associated Robin's function $ R_K $
\begin{equation*}
H_K(x,y)=H^0(x,y)\ \ \forall x ,y\in\Omega,
\end{equation*}
\begin{equation*}
R_K(x)=H_K(x,x)\ \ \forall x\in\Omega.
\end{equation*}
It follows from Theorem \ref{thm1} and Theorem \ref{thm2} that $ H_K\in C\left (\Omega, C^{0,\gamma}\left (\overline{\Omega}\right )\right )\cap C(\Omega\times\Omega) $. Note that this regularity is optimal since generally $ \mathcal{L}_KH_K(\cdot,y)\notin L^2(\Omega) $ and thus $ H_K(\cdot,y) $ does not belong to $ H^2(\Omega) $ for $ n\geq 2 $.
However, we have
\begin{theorem}\label{thm3}
Let $ n\geq 2 $ be an integer, $ \Omega $ be a bounded smooth domain and $ K $ be a positively definite smooth matrix-valued function in $ \Omega $. Then the Robin's function $ R_K(\cdot)\in C^\infty(\Omega) $.
\end{theorem}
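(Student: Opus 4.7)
The plan is to exploit the high-order asymptotic expansions from Theorems \ref{thm1} and \ref{thm2} to trade expansion order for joint regularity of the remainder, and then to use the fact that the extra terms appearing when one goes to higher order vanish on the diagonal. This will force $R_K(y) = H^l(y,y)$ for any $l \geq 1$, after which arbitrariness of $l$ yields $C^\infty$ regularity.

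The first step is to fix $l \geq 1$ and apply Theorem \ref{thm1} (odd $n$) or Theorem \ref{thm2} (even $n$) both with $l=0$ and with the chosen $l$. By the uniqueness assertion in these theorems, the intermediate terms $\Phi_i$ for $i = 1, \ldots, n-2$ agree across the two expansions, so subtracting produces
\begin{equation*}
H_K(x,y) = H^0(x,y) = \sum_{i=n-1}^{n+l-2} \Phi_i\bigl(T_y(x-y)\bigr) + H^l(x,y),
\end{equation*}
valid a priori for $x \neq y$ in $\Omega \times \Omega$, with each $\Phi_i$ depending on $y$.

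The second step is to verify that every summand on the right extends continuously across the diagonal. An element of $E^{n+2m}_{k+2m}$ is a rational function of $z$ that is positively homogeneous of degree $k - n$, and an element of $L_{k-n}$ is a linear combination of monomials $z^\alpha \log |z|$ with $|\alpha| = k - n$. With $m = 2i-1$ and $k = i+2$, the relevant degree is $i + 2 - n$, which is at least $1$ whenever $i \geq n - 1$. Hence, for every such $i$, $\Phi_i(z) \to 0$ as $z \to 0$ locally uniformly in the parameter $y$. Combined with the continuity of $H^l$, this makes the right-hand side of the displayed identity continuous on $\Omega \times \Omega$. By continuity of $H_K$, the identity therefore extends to $x = y$, and evaluating there gives $R_K(y) = H^l(y,y)$.

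Finally, since $H^l \in C^l(\Omega \times \Omega)$, the diagonal restriction $y \mapsto H^l(y,y)$ is $C^l(\Omega)$, so $R_K \in C^l(\Omega)$. Because $l \in \mathbb{N}$ was arbitrary, $R_K \in C^\infty(\Omega)$. The only delicate points will be the consistency of the $\Phi_i$ across different values of $l$, which is exactly what uniqueness in Theorems \ref{thm1} and \ref{thm2} provides, and, in the even-dimensional case, checking that the logarithmic corrections $z^\alpha \log|z|$ contributing to $F^{n+2(2i-1)}_{i+2+2(2i-1)}$ for $i \geq n-1$ likewise tend to zero at $z = 0$ (which they do, since $|\alpha| = i + 2 - n \geq 1$ beats the logarithm).
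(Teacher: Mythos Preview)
Your proposal is correct and follows essentially the same route as the paper: derive $H_K(x,y)=\sum_{i=n-1}^{n+l-2}\Phi_i(T_y(x-y))+H^l(x,y)$ from the consistency of the expansions, observe that $\Phi_i(0)=0$ for $i\ge n-1$ (you spell out the homogeneity and logarithmic-term reasons more explicitly than the paper does), conclude $R_K(y)=H^l(y,y)\in C^l(\Omega)$, and let $l$ be arbitrary. The paper's proof (Propositions~\ref{thm3-1} and~\ref{thm3-2}) is identical in structure, just more terse.
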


\begin{remark}

Note that for $ n=2 $,  it is proved in \cite{CW} that Green's function $ G_K $ has a structure \eqref{1001}, where the regular part $ S_K $ belongs to $ C^{0,\gamma} $. Since $ S_K(x,x)=R_K(x) $ for any $ x\in\Omega $, using Theorem \ref{thm3}, we  actually show that $ S_K(x,x) $ is not just $ C^{0,\gamma} $, but smooth indeed, which improves the regularity results of the regular part $ S_K(x,x) $ in \cite{CW2,CW}.

\end{remark}

The paper is organized as follows. In section 2, we prove the expansion of $ G_K $ when $ n $ is odd. By similar strategy we prove the even dimensional case in section 3. Based on  Theorem \ref{thm1} and \ref{thm2}, we finish the proof of Theorem \ref{thm3} at the end of section 3.

\section{Odd dimensional case}

Now we begin to prove the expansion of Green's function $ G_K $ of \eqref{Def of Green1} when $ n\geq 3 $ is odd.  For any subset $ A_j $ in a vector space $ V $, we denote $ A_0+A_1+\cdots+A_p=\{\sum_{j=0}^pv_j\mid v_j\in A_j\}. $ We write $ A_0\oplus A_1\oplus\cdots\oplus A_p $ when the sum is direct.
For any $ l\in\mathbb{N}, 2l\leq k+2m $, we define
\begin{equation*}
E^{n+2m}_{k+2m,l}=\text{span}\left \{\frac{x^\alpha}{r^{n+2m-2l}}\mid |\alpha|=k+2m-2l, \alpha\in \mathbb{N}^n\right \}.
\end{equation*}
Note that from Definition \ref{def of E}, for any $ k\in\mathbb{N}^* $, $  E^{n+2m}_{K+2m}\in L^1_{loc}(\mathbb{R}^n) $  and for any $ m\in\mathbb{N} $, $ i,j\in\mathbb{N}^*, i\neq j $
\begin{equation*}
E^{n+2m}_{i+2m}\cap E^{n+2m}_{j+2m}=\{0\}.
\end{equation*}
Moreover, for any $ m,i\in\mathbb{N}^* $
\begin{equation*}
E^{n}_{i}\subsetneqq E^{n+2}_{i+2}\subsetneqq\cdots\cdots\subsetneqq E^{n+2m}_{i+2m}
\end{equation*}
and for any $ k\in\mathbb{N}^*, m,l\in \mathbb{N}, 2(l+1)\leq k+2m$
\begin{equation*}
E^{n+2m}_{k+2m,l+1}\subsetneqq E^{n+2m}_{k+2m,l}.
\end{equation*}

Using algebraic structure, we first show that $ E^{n+2m}_{k+2m} $ is contained in $ \Delta(E^{n+2m}_{k+2m+2}) $ for any $ k\in\mathbb{N}^*, m\in \mathbb{N}$.
\begin{lemma}\label{R of Laplacian }
Let $ n $ be odd. Then for any $ k\in\mathbb{N}^*, m\in \mathbb{N}$
\begin{equation*}
E^{n+2m}_{k+2m}\subseteq \Delta(E^{n+2m}_{k+2m+2}).
\end{equation*}
That is, for any $ f\in E^{n+2m}_{k+2m} $, there exists $ g\in E^{n+2m}_{k+2m+2} $ such that $ \Delta g=f. $
\end{lemma}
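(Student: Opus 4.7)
The plan is to exploit the Fischer decomposition of homogeneous polynomials together with an explicit Laplacian formula for homogeneous radial--angular functions of the form $r^{\lambda}H_{\ell}$, where $H_{\ell}$ denotes a harmonic polynomial of degree $\ell$. Working term by term, given a generator $f=x^{\alpha}/r^{n+2m}$ with $|\alpha|=k+2m$, I would first apply the Fischer decomposition
\begin{equation*}
x^{\alpha}=\sum_{j=0}^{\lfloor(k+2m)/2\rfloor} r^{2j}\,H^{(\alpha)}_{k+2m-2j},
\end{equation*}
so that $f=\sum_{j}r^{2j-n-2m}H^{(\alpha)}_{k+2m-2j}$.

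Next I would propose the ansatz
\begin{equation*}
g=\sum_{j} c_{j}\, r^{2j+2-n-2m}\,H^{(\alpha)}_{k+2m-2j},
\end{equation*}
and observe that $g\in E^{n+2m}_{k+2m+2}$: each summand equals $r^{2(j+1)}H^{(\alpha)}_{k+2m-2j}/r^{n+2m}$, where the numerator is a homogeneous polynomial of degree $k+2m+2$, hence a linear combination of monomials of that degree, which may then be expanded back into the generating set $\{x^{\beta}/r^{n+2m}:|\beta|=k+2m+2\}$ defining $E^{n+2m}_{k+2m+2}$. Applying Leibniz's rule together with Euler's identity $x\cdot\nabla H_{\ell}=\ell H_{\ell}$ and $\Delta r^{\lambda}=\lambda(\lambda+n-2)r^{\lambda-2}$ yields the closed-form identity
\begin{equation*}
\Delta(r^{\lambda}H_{\ell})=\lambda(\lambda+n-2+2\ell)\,r^{\lambda-2}H_{\ell}.
\end{equation*}
Substituting $\lambda=2j+2-n-2m$ and $\ell=k+2m-2j$ gives $\Delta g=\sum_{j}2c_{j}(2j+2-n-2m)(k+m-j)\,r^{2j-n-2m}H^{(\alpha)}_{k+2m-2j}$, and matching against $f$ forces
\begin{equation*}
c_{j}=\frac{1}{2(2j+2-n-2m)(k+m-j)}.
\end{equation*}

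The only nontrivial step, and the one where the hypothesis that $n$ is odd is indispensable, is verifying that both factors in the denominator are nonzero for every $j$ in the summation range. The factor $2j+2-n-2m$ is an odd integer when $n$ is odd (independent of $j$ and $m$), so it cannot vanish; this is precisely the algebraic input that fails in the even-dimensional setting, where $j=(n-2)/2+m$ would produce a resonance and force the introduction of the logarithmic terms built into $F^{n+2m}_{k+2m}$ in Section~3. The factor $k+m-j$ is strictly positive because $k\ge 1$ gives $j\le\lfloor(k+2m)/2\rfloor\le k/2+m<k+m$. Once this is established, the coefficients $c_{j}$ are well-defined, the ansatz $g$ is an element of $E^{n+2m}_{k+2m+2}$, and $\Delta g=f$, so the inclusion $E^{n+2m}_{k+2m}\subseteq\Delta(E^{n+2m}_{k+2m+2})$ follows by linearity. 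I expect the main bookkeeping obstacle, should one wish to avoid the harmonic decomposition, to be tracking how the polynomial character is preserved after inverting $\Delta$; the Fischer decomposition route bypasses this cleanly.
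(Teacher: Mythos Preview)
Your argument is correct and gives a genuinely different proof from the paper's. The paper proceeds by a descending induction: for a generator $f=x^{\alpha}/r^{n+2m-2l}$ it sets $\bar g=r^{2}f$, computes
\[
\Delta\bar g=\frac{\Delta x^{\alpha}}{r^{n+2m-2(l+1)}}+2(2l+2-n-2m)(|\alpha|+l-m)\,f,
\]
observes that the scalar factor is nonzero (here $n$ odd is used), and then reduces the residual term $\Delta x^{\alpha}/r^{n+2m-2(l+1)}\in E^{n+2m}_{k+2m,l+1}$ by induction on $|\alpha|$. Your route instead diagonalises the problem from the outset via the Fischer decomposition, so that each piece $r^{2j-n-2m}H_{k+2m-2j}$ is an exact eigenfunction of the operation ``multiply by $r^{2}$ then apply $\Delta$'', and no induction is needed; the preimage is written down in closed form. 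The nonvanishing conditions you isolate, $2j+2-n-2m\neq 0$ and $k+m-j\neq 0$, are literally the same two factors the paper checks (with your $j$ playing the role of their $l$, and $|\alpha|+l-m=k+m-l$). What the paper's approach buys is that it applies verbatim to arbitrary monomials without invoking the harmonic decomposition, which keeps the later even-dimensional modifications (Lemma~\ref{le2-2}) parallel in structure; what your approach buys is a one-shot formula for $g$ and a transparent explanation of why the even case forces logarithms, namely the resonance at $j=(n-2)/2+m$.
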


\begin{proof}
Note that $ E^{n+2m}_{k+2m}=\sum_{0\leq 2l\leq k+2m}E^{n+2m}_{k+2m,l} $. Given $ f(x)=\frac{x^\alpha}{r^{n+2m-2l}}\in E^{n+2m}_{k+2m,l} $ with $ |\alpha|=k+2m-2l\geq 0, $ let $ \bar{g}(x)=r^2f(x)=\frac{x^\alpha}{r^{n+2m-2(l+1)}}\in E^{n+2m}_{k+2m+2,l+1}. $ Then using the fact that $ x\cdot \nabla(x^\beta)=|\beta|x^\beta, \forall  \beta\in\mathbb{N}^n $, we obtain
\begin{equation*}
\Delta \bar{g}=\Delta(\frac{x^\alpha}{r^{n+2m-2(l+1)}})=\frac{\Delta x^\alpha}{r^{n+2m-2(l+1)}}+2(2l+2-n-2m)(|\alpha|+l-m)f.
\end{equation*}
Since $ n $ is odd, $ 2l+2-n-2m\neq 0. $ Since $ k\geq1, $ $ |\alpha|+l-m=\frac{k+|\alpha|}{2}>0. $ Thus  $ 2(2l+2-n-2m)(|\alpha|+l-m)\neq 0 $, which implies that
\begin{equation*}
\begin{split}
&\frac{1}{2(2l+2-n-2m)(|\alpha|+l-m)}\Delta \bar{g}-f\\
&\ \ =\frac{1}{2(2l+2-n-2m)(|\alpha|+l-m)}\frac{\Delta x^\alpha}{r^{n+2m-2(l+1)}}\in E^{n+2m}_{k+2m,l+1}.
\end{split}
\end{equation*}
In the case $ |\alpha|\leq 1, $  we already have $ \frac{1}{2(2l+2-n-2m)(|\alpha|+l-m)}\Delta \bar{g}-f=0 $, i.e., $ \frac{1}{2(2l+2-n-2m)(|\alpha|+l-m)} \bar{g} $ is a solution.
If $ |\alpha|> 1, $ then we conclude by decreasing induction on $ |\alpha| $ since $ \Delta x^\alpha\in \text{span}\{x^\beta\mid |\beta|=|\alpha|-2\}. $ Therefore, we prove the existence of $ g\in E^{n+2m}_{k+2m+2} $ such that $ \Delta g=f. $

\end{proof}

\begin{remark}
One computes directly that $ \Delta(E^{n+2m}_{k+2m+2})\subseteq E^{n+2m+2}_{k+2m+2}. $
Thus the above lemma shows that for any $ k\in\mathbb{N}^*, m\in \mathbb{N}$
\begin{equation*}
E^{n+2m}_{k+2m}\subseteq \Delta(E^{n+2m}_{k+2m+2})\subseteq E^{n+2m+2}_{k+2m+2}.
\end{equation*}
Moreover, the function $ g $ constructed in Lemma \ref{R of Laplacian } is indeed unique. Let $ g\in E^{n+2m}_{k+2m+2} $ such that $ \Delta g=0. $ Since $ n $ is odd, 
$ g $ cannot be in $ E^{n+2m}_{l}\backslash\{0\} $ for any $ l\in\mathbb{N}^* $. Thus the uniqueness of $ g $ holds.
\end{remark}

Let $ z\in\Omega $ be fixed and $ T_z $ be the  positively definite matrix defined by \eqref{matrix T1}. We define a linear space
\begin{equation*}
E^{n+2m}_{k+2m}(T_z)=\{f\circ T_z\mid f \in E^{n+2m}_{k+2m}\}.
\end{equation*}
Clearly, $ E^{n+2m}_{i+2m}(T_z)\cap E^{n+2m}_{j+2m}(T_z)=\{0\} $ for any $ m\in\mathbb{N} $, $ i,j\in\mathbb{N}^*, i\neq j $. A direct consequence of  Lemma \ref{R of Laplacian } is
\begin{lemma}\label{R of Laplacian2}
Let $ n $ be odd and $ z\in\Omega $ be fixed. Then for any $ k\in\mathbb{N}^*, m\in \mathbb{N}$
	\begin{equation*}
	E^{n+2m}_{k+2m}(T_z)\subseteq -\text{div}(K(z)\nabla)(E^{n+2m}_{k+2m+2}(T_z)).
	\end{equation*}
That is, for any $ f\in E^{n+2m}_{k+2m}(T_z) $, there exists $ g\in E^{n+2m}_{k+2m+2}(T_z) $ such that $ -\text{div}(K(z)\nabla) g=f. $
\end{lemma}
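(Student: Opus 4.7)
The plan is to reduce Lemma \ref{R of Laplacian2} to Lemma \ref{R of Laplacian } via the linear change of variables $y = T_z x$. The operator $-\mathrm{div}(K(z)\nabla)$ has constant coefficients (since $z$ is fixed), so a straightforward computation should conjugate it to $-\Delta$, at which point Lemma \ref{R of Laplacian } can be invoked directly.

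First I would carry out the conjugation computation. Given a smooth function $u$, write $u(x) = \tilde u(T_z x)$. Then $\nabla_x u = T_z^t \nabla_y \tilde u$, so using the defining relation $K(z) = T_z^{-1}(T_z^{-1})^t$ from \eqref{matrix T1},
\begin{equation*}
K(z)\nabla_x u = T_z^{-1}(T_z^{-1})^t T_z^t \nabla_y \tilde u = T_z^{-1}\nabla_y \tilde u.
\end{equation*}
Applying $\mathrm{div}_x$ and using $\partial_{x_i} = \sum_k (T_z)_{ki}\partial_{y_k}$ yields $\mathrm{div}_x\bigl(K(z)\nabla_x u\bigr) = (\Delta_y \tilde u)\circ T_z$. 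Hence $-\mathrm{div}(K(z)\nabla)(\tilde g\circ T_z) = (-\Delta \tilde g)\circ T_z$ for every smooth $\tilde g$.

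Next, given $f\in E^{n+2m}_{k+2m}(T_z)$, I would write $f = \tilde f\circ T_z$ with $\tilde f\in E^{n+2m}_{k+2m}$, then apply Lemma \ref{R of Laplacian } (noting $-\tilde f$ still lies in the same linear space) to obtain $\tilde g\in E^{n+2m}_{k+2m+2}$ with $\Delta \tilde g = -\tilde f$. Setting $g = \tilde g\circ T_z$, which by definition lies in $E^{n+2m}_{k+2m+2}(T_z)$, the conjugation formula gives
\begin{equation*}
-\mathrm{div}(K(z)\nabla)\, g = (-\Delta \tilde g)\circ T_z = \tilde f\circ T_z = f,
\end{equation*}
establishing the claimed inclusion.

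There is no real obstacle here: the whole argument is just the observation that the constant-coefficient operator $-\mathrm{div}(K(z)\nabla)$ is, up to the linear change of variables built into the very definition of $T_z$, identical to $-\Delta$, and that the linear space $E^{n+2m}_{k+2m}(T_z)$ is precisely the pullback of $E^{n+2m}_{k+2m}$ under $T_z$. The only mild point worth stressing in the write-up is that the inclusion $E^{n+2m}_{k+2m}\subseteq \Delta(E^{n+2m}_{k+2m+2})$ from Lemma \ref{R of Laplacian } is used for the element $-\tilde f$ rather than $\tilde f$, but since these spaces are vector spaces this is immaterial.
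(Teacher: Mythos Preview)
Your proof is correct and follows essentially the same approach as the paper: write $f=\hat f\circ T_z$, invoke Lemma~\ref{R of Laplacian } to solve $-\Delta\hat g=\hat f$, and set $g=\hat g\circ T_z$. The only difference is that you spell out the conjugation identity $-\mathrm{div}(K(z)\nabla)(\tilde g\circ T_z)=(-\Delta\tilde g)\circ T_z$, whereas the paper simply says ``one computes directly.''
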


\begin{proof}
For any $ f\in E^{n+2m}_{k+2m}(T_z) $, we can find $ \hat{f}\in E^{n+2m}_{k+2m}  $ such that $ f=\hat{f}\circ T_z $. Hence by Lemma \ref{R of Laplacian }, there exists $ \hat{g}\in E^{n+2m}_{k+2m+2} $ such that $ -\Delta \hat{g}=\hat{f} $. Let $ g=\hat{g}\circ T_z $. One computes directly that $ g\in E^{n+2m}_{k+2m+2}(T_z) $ and $ -\text{div}(K(z)\nabla) g=f. $
\end{proof}

Using Lemma \ref{R of Laplacian2}, we can solve elliptic equations in divergence form  with constant coefficient matrix $ K(z) $ when $ f\in E^{n+2m}_{k+2m}(T_z) $.
\begin{lemma}\label{solu of Lap}
Let $ n $ be odd  and $ z\in\Omega $ be fixed. Let $ \Omega_0 $ be a bounded smooth domain in $ \mathbb{R}^n $ containing the origin $ 0 $ and $ c $ be a smooth function defined on $ \overline{\Omega}_0. $ Then for any $ k\in\mathbb{N}^*, m,l\in \mathbb{N}$, $ f\in E^{n+2m}_{k+2m}(T_z) $, there exists
\begin{equation}\label{101}
g\in E^{n+2m}_{k+2m+2}(T_z)\oplus E^{n+2m}_{k+2m+3}(T_z)\oplus\cdots\oplus E^{n+2m}_{n+2m+l}(T_z)\oplus  C^{l,\gamma}(\overline{\Omega}_0)
\end{equation}
such that
\begin{equation*}
-\text{div}(K(z)\nabla g)=c(x)f\ \ \text{in}\ \Omega_0;\ \ g=0\ \ \text{on}\ \partial\Omega_0.
\end{equation*}
Moreover, the expansion of $ g $ is unique.
\end{lemma}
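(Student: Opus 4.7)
The plan is to reduce the inhomogeneous problem to a finite collection of model equations solvable by Lemma \ref{R of Laplacian2}, together with a Dirichlet solve for the constant-coefficient operator $-\mathrm{div}(K(z)\nabla\,\cdot\,)$ that absorbs the smooth Taylor remainder and corrects the boundary values. Uniqueness will come from the sharp direct-sum structure of the $E^{n+2m}_{p}(T_z)$ spaces, which in turn rests on the parity observation following Lemma \ref{R of Laplacian }.

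Concretely, I would first Taylor expand $c$ at the origin,
\[
c(x) = \sum_{|\beta| \le N} \frac{D^{\beta}c(0)}{\beta!}\, x^{\beta} + R_{N}(x),
\]
where $R_{N}\in C^{\infty}(\overline{\Omega}_{0})$ vanishes to order $N+1$ at $0$. Each element of $E^{n+2m}_{k+2m}(T_z)$ is bounded in modulus by a constant multiple of $r^{k-n}$ near the origin, so choosing $N$ large enough (depending on $l$, $k$, $n$, and $\gamma$) guarantees $R_{N}\,f\in C^{l,\gamma}(\overline{\Omega}_{0})$. Moreover, multiplication by the monomial $x^{\beta}$ sends $E^{n+2m}_{k+2m}(T_z)$ into $E^{n+2m}_{k+2m+|\beta|}(T_z)$: after the linear substitution $y=T_{z}x$, the factor $x^{\beta}$ becomes a linear combination of monomials $y^{\gamma}$ of the same degree $|\beta|$, so the numerator degree merely increases by $|\beta|$. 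Applying Lemma \ref{R of Laplacian2} to each term then yields $h_{\beta}\in E^{n+2m}_{k+2m+|\beta|+2}(T_z)$ satisfying
\[
-\mathrm{div}(K(z)\nabla h_{\beta}) = \frac{D^{\beta}c(0)}{\beta!}\,x^{\beta}\,f \qquad \text{on } \mathbb{R}^{n}\setminus\{0\}.
\]

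Set $G_{s}:=\sum_{|\beta|\le N}h_{\beta}$. Its trace $G_{s}|_{\partial\Omega_{0}}$ is $C^{\infty}$ since all singularities of the $h_{\beta}$ are concentrated at the interior point $0$. By Schauder theory for the constant-coefficient elliptic operator $-\mathrm{div}(K(z)\nabla\,\cdot\,)$, there is a unique $h_{R}\in C^{l+2,\gamma}(\overline{\Omega}_{0})$ with
\[
-\mathrm{div}(K(z)\nabla h_{R}) = R_{N}(x)\,f(x) \text{ in } \Omega_{0}, \qquad h_{R}=-G_{s} \text{ on } \partial\Omega_{0}.
\]
Then $g:=G_{s}+h_{R}$ solves the full equation, vanishes on $\partial\Omega_{0}$, and belongs to the claimed direct sum.

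For uniqueness, subtracting two decompositions produces a null identity $\sum_{j}w_{j}+w_{R}\equiv 0$ in $\Omega_{0}$ with $w_{j}\in E^{n+2m}_{k+2m+j+1}(T_z)$ and $w_{R}\in C^{l,\gamma}(\overline{\Omega}_{0})$. Extracting the most singular summand at $0$ and invoking the trivial-kernel observation in the remark after Lemma \ref{R of Laplacian } (which is where the oddness of $n$ enters decisively) forces each $w_{j}$ to vanish in turn; the residual identity $w_{R}\equiv 0$ then follows from Dirichlet uniqueness. The main obstacle is precisely this uniqueness step: all the spaces $E^{n+2m}_{p}(T_z)$ exhibit asymptotic behavior of the form (power of $r$)$\times$(angular function) near $0$, so separating the orders of singularity and excluding accidental cancellations between the singular pieces, and between the singular pieces and the $C^{l,\gamma}$ remainder, requires the parity-based rigidity provided by Lemma \ref{R of Laplacian }.
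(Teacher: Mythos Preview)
Your proposal is correct and follows essentially the same route as the paper. Both arguments Taylor-expand $c$ at the origin, invoke Lemma~\ref{R of Laplacian2} term by term to produce the singular part of $g$, handle the $C^{l,\gamma}$ Taylor remainder and the boundary correction by a Schauder-type Dirichlet solve for the constant-coefficient operator $-\mathrm{div}(K(z)\nabla\cdot)$, and deduce uniqueness from the directness of the sum in \eqref{101} (which, as you correctly emphasize, relies on the parity of $n$ via the remark after Lemma~\ref{R of Laplacian }). The only cosmetic difference is that the paper first builds a global $g_1$ (singular pieces plus a $C^{l+2,\gamma}$ antiderivative of the remainder) and then subtracts a harmonic $g_2$ matching $g_1$ on $\partial\Omega_0$, whereas you merge the remainder solve and the boundary correction into a single Dirichlet problem for $h_R$; the two constructions are interchangeable.
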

\begin{remark}
If $ k\geq n+l-1 $, we just think of $ E^{n+2m}_{k+2m+2}(T_z)\oplus E^{n+2m}_{k+2m+3}(T_z)\oplus\cdots\oplus E^{n+2m}_{n+2m+l}(T_z) $ as $ \{0\}. $
\end{remark}

\begin{proof}
For $ x\in\mathbb{R}^n $, we denote the operator $ [x\cdot\nabla]=\sum_{i=1}^nx_i\partial_i $. It follows from the Taylor's formula that
\begin{equation*}
c(x)=\sum_{j=0}^{n+l-k}\frac{1}{j!}[x\cdot\nabla]^jc(0)+\frac{1}{(n+l-k)!}\int_{0}^{1}(1-t)^{n+l-k}[x\cdot\nabla]^{n+l-k+1}c(tx)dt.
\end{equation*}
Thus one has
\begin{equation*}
c(x)f\in E^{n+2m}_{k+2m}(T_z)\oplus E^{n+2m}_{k+2m+1}(T_z)\oplus\cdots\oplus E^{n+2m}_{n+2m+l}(T_z)\oplus  C^{l,\gamma}(\overline{\Omega}_0).
\end{equation*}
Using Lemma \ref{R of Laplacian2}, we get the existence of $ g_1\in E^{n+2m}_{k+2m+2}(T_z)\oplus E^{n+2m}_{k+2m+3}(T_z)\oplus\cdots\oplus E^{n+2m}_{n+2m+l+2}(T_z)\oplus  C^{l+2,\gamma}(\overline{\Omega}_0) $ such that $ -\text{div}(K(z)\nabla g_1)=f $.   Since the trace of any function in $ E^{n+2m}_{j}(T_z) $ on $ \partial\Omega_0 $ is smooth, there exists a  function $ g_2\in C^{\infty}(\overline{\Omega}_0) $ such that $ -\text{div}(K(z)\nabla g_2)=0 $ in $ \Omega_0 $ and $ g_2=g_1 $ on $ \partial \Omega_0 $. Note that $ E^{n+2m}_{n+2m+l+1}(T_z)\oplus E^{n+2m}_{n+2m+l+2}(T_z)\subseteq C^{l,\gamma}(\overline{\Omega}_0) $, we conclude that $ g=g_1-g_2 $ is the desired solution. The uniqueness of the decomposition of $ g $ follows directly from the maximum principle and the fact that the sum in \eqref{101} is direct.

\end{proof}

Based on Lemma \ref{solu of Lap}, we can
extend results of Lemma \ref{solu of Lap}  to the case of operators  $ -\text{div}(\tilde{K}(x)\nabla  ) $, when the coefficient matrix satisfies $ \tilde{K}(0)=K(z) $.

\begin{proposition}\label{solu of near Lap}
Let $ n $ be odd, $ z\in\Omega $, $ \Omega_0 $ and $ c  $ be as in Lemma \ref{solu of Lap}. Let $ \tilde{K}\in C^{\infty}(\overline{\Omega}_0, \mathbb{R}^{n\times n}) $ be a positively definite matrix-valued function with $ \tilde{K}(0)=K(z). $ Then for any $ k\in\mathbb{N}^*, m,l\in \mathbb{N}$, $ f\in E^{n+2m}_{k+2m}(T_z) $, there exists
\begin{equation*}
\zeta\in E^{n+2m}_{k+2m+2}(T_z)\oplus E^{n+2m+4}_{k+2m+7}(T_z)\oplus\cdots\oplus E^{n+2m+4(n+l-k-2)}_{n+2m+l+4(n+l-k-2)}(T_z)\oplus  C^{l,\gamma}(\overline{\Omega}_0)
\end{equation*}
such that
\begin{equation*}
-\text{div}(\tilde{K}(x)\nabla \zeta)=c(x)f\ \ \text{in}\ \Omega_0;\ \ \zeta=0\ \ \text{on}\ \partial\Omega_0.
\end{equation*}
Moreover, the expansion of $ \zeta $ is unique.
\end{proposition}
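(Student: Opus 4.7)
The plan is to invert $-\text{div}(\tilde K(x)\nabla\cdot)$ by a Neumann-type iteration built on Lemma \ref{solu of Lap}. Write $\tilde K(x) = K(z) + R(x)$ with $R\in C^\infty(\overline{\Omega}_0)$ and $R(0) = 0$, set $P_0 := -\text{div}(K(z)\nabla\cdot)$ and $\mathcal{Q}u := -\text{div}(R(x)\nabla u)$, so that $-\text{div}(\tilde K\nabla\cdot) = P_0 + \mathcal{Q}$, and let $\mathcal{S}$ be the Dirichlet solution operator for $P_0$ on $\Omega_0$ supplied by Lemma \ref{solu of Lap} (run with a generous intermediate regularity exponent $l'$ so that derivative losses from $\mathcal{Q}$ never exhaust the budget). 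The candidate for $\zeta$ is the truncated Neumann series
\begin{equation*}
\zeta := \sum_{j=0}^{M}(-\mathcal{S}\mathcal{Q})^{j}\mathcal{S}(c(x)f) + \eta,
\end{equation*}
with $M$ slightly larger than $N := n+l-k-2$ (concretely $M = n+l-k$ suffices) and $\eta \in C^{l+2,\gamma}(\overline{\Omega}_0)$ a final smooth correction.

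The engine of the proof is a one-step index shift: if $u$ has leading singular part in $E^{n+2m+4j}_{k+2m+2+5j}(T_z)$, then $\nabla u$ lies in $E^{n+2m+4j+2}_{k+2m+3+5j}(T_z)$ via the strict inclusion $E^{n+2m'}_{i}\subsetneq E^{n+2m'+2}_{i+2}$ recorded at the start of the section, Taylor-expanding $R(x)$ with leading order $|\beta|=1$ moves this into $E^{n+2m+4j+2}_{k+2m+4+5j}(T_z)$, a further divergence lands $\mathcal{Q}u$ in $E^{n+2m+4(j+1)}_{k+2m+5(j+1)}(T_z)$, and Lemma \ref{solu of Lap} inverts this to yield $\mathcal{S}\mathcal{Q}u$ with leading part in $E^{n+2m+4(j+1)}_{k+2m+2+5(j+1)}(T_z)$, exactly the next slot in the claimed expansion. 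All subleading contributions generated along the way (higher Taylor orders of $R$ and $c$, subleading slots in the output of $\mathcal{S}$, and their mutual interactions) carry strictly larger degree and, via the same inclusion chain, are reassignable to one of the $N+1$ listed slots; once their degree surpasses $l$ they sit in $C^{l,\gamma}$, and for $j>N$ the full iterate $(-\mathcal{S}\mathcal{Q})^{j}\mathcal{S}(c(x)f)$ already has leading degree greater than $l$ and so lies entirely in $C^{l,\gamma}(\overline{\Omega}_0)$.

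A direct telescoping computation using $P_0\mathcal{S}=I$ gives
\begin{equation*}
(P_0+\mathcal{Q})\sum_{j=0}^{M}(-\mathcal{S}\mathcal{Q})^{j}\mathcal{S}(c(x)f) = c(x)f + (-1)^{M}(\mathcal{Q}\mathcal{S})^{M+1}(c(x)f),
\end{equation*}
and the tail term $(\mathcal{Q}\mathcal{S})^{M+1}(c(x)f)$ has leading degree $k-n+M+1\geq l+1$ for $M\geq n+l-k$, hence lies in $C^{l,\gamma}(\overline{\Omega}_0)$. Standard Schauder theory for smooth-coefficient divergence-form elliptic operators then yields a unique $\eta\in C^{l+2,\gamma}(\overline{\Omega}_0)$ solving $-\text{div}(\tilde K\nabla\eta) = (-1)^{M+1}(\mathcal{Q}\mathcal{S})^{M+1}(c(x)f)$ in $\Omega_0$ with zero boundary data, and the extra iterates with $N<j\le M$ (all already in $C^{l,\gamma}$) get absorbed into this smooth part. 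Uniqueness of the expansion follows from the direct-sum structure in the statement together with the maximum principle: any two such expansions would differ by an element of the direct sum that also solves the homogeneous Dirichlet problem for $-\text{div}(\tilde K\nabla\cdot)$, hence vanishes. The main obstacle will be the combinatorial bookkeeping: at each iteration $\mathcal{S}\mathcal{Q}$ produces not just the designated leading slot but a shower of subleading singular terms, and one must verify that after combining all $M+1$ iterations every surviving piece is assigned to exactly one of the $N+1$ listed $E$-space slots or to the $C^{l,\gamma}$ remainder, without double-counting.
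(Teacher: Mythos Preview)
Your proposal is correct and is essentially the same argument as the paper's, recast in operator language: the paper constructs the sequence $\zeta_j$ explicitly by solving $-\text{div}(\tilde K(0)\nabla\zeta_j)=\text{div}((\tilde K(x)-\tilde K(0))\nabla\zeta_{j-1})$ with zero boundary data and then closes with a variable-coefficient Schauder solve, which is precisely your $\zeta_j=(-\mathcal S\mathcal Q)^j\mathcal S f$ followed by the correction $\eta$. The index-shift computation, the choice of iteration length $M\approx n+l-k$, and the uniqueness via direct sum plus maximum principle all match; the ``combinatorial bookkeeping'' you flag as the main obstacle is exactly what the paper tracks explicitly in the displayed inclusions for each $\zeta_j$.
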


\begin{remark}
	If $ k\geq n+l-1 $, we just think of $ E^{n+2m}_{k+2m+2}(T_z)\oplus E^{n+2m+4}_{k+2m+7}(T_z)\oplus\cdots\oplus E^{n+2m+4(n+l-k-2)}_{n+2m+l+4(n+l-k-2)}(T_z) $ as $ \{0\}. $
\end{remark}

\begin{proof}
Consider the case $ c\equiv 1 $. To find a solution $ \zeta $, we construct a sequence of $ \{\zeta_j\}_{j=0}^{n+l-k+1}. $ First, let   $ \zeta_0 $ satisfy
\begin{equation}\label{2001}
-\text{div}(\tilde{K}(0)\nabla \zeta_0)=f\ \ \text{in}\ \Omega_0;\ \ \zeta_0=0\ \ \text{on}\ \partial\Omega_0.
\end{equation}
It follows from Lemma \ref{solu of Lap} that
\begin{equation*}
\zeta_0\in E^{n+2m}_{k+2m+2}(T_z)\oplus E^{n+2m}_{k+2m+3}(T_z)\oplus\cdots\oplus E^{n+2m}_{n+2m+l+2}(T_z)\oplus  C^{l+2,\gamma}(\overline{\Omega}_0).
\end{equation*}
Thus using Taylor's formula, we have
\begin{equation*}
\begin{split}
\text{div}((\tilde{K}(x)-\tilde{K}(0))\nabla \zeta_0)&=\sum_{i,j=1}^n(\tilde{K}_{ij}(x)-\tilde{K}_{ij}(0))\partial_{ij}\zeta_0+\sum_{i,j=1}^n\partial_i\tilde{K}_{ij}(x)\partial_{j}\zeta_0\\
&\in E^{n+2m+4}_{k+2m+5}(T_z)\oplus E^{n+2m+4}_{k+2m+6}(T_z)\oplus\cdots\oplus E^{n+2m+4}_{n+2m+l+4}(T_z)\oplus  C^{l,\gamma}(\overline{\Omega}_0).
\end{split}
\end{equation*}
Let $ \zeta_1 $ satisfy
\begin{equation*}
-\text{div}(\tilde{K}(0)\nabla \zeta_1)=\text{div}((\tilde{K}(x)-\tilde{K}(0))\nabla \zeta_0)\ \ \text{in}\ \Omega_0;\ \ \zeta_1=0\ \ \text{on}\ \partial\Omega_0.
\end{equation*}
Using  Lemma \ref{solu of Lap} again, we obtain
\begin{equation*}
\zeta_1\in E^{n+2m+4}_{k+2m+7}(T_z)\oplus E^{n+2m+4}_{k+2m+8}(T_z)\oplus\cdots\oplus E^{n+2m+4}_{n+2m+l+6}(T_z)\oplus  C^{l+2,\gamma}(\overline{\Omega}_0),
\end{equation*}
which implies that
\begin{equation*}
\begin{split}
\text{div}((\tilde{K}(x)-\tilde{K}(0))\nabla \zeta_1)\in E^{n+2m+8}_{k+2m+10}(T_z)\oplus E^{n+2m+8}_{k+2m+11}(T_z)\oplus\cdots\oplus E^{n+2m+8}_{n+2m+l+8}(T_z)\oplus  C^{l,\gamma}(\overline{\Omega}_0)
\end{split}
\end{equation*}
and so on. Thus by a recursive method we conclude that there exists $ \zeta_{j} $ for $ j=1,\cdots, n+l-k+1 $ with
\begin{equation}\label{2002}
-\text{div}(\tilde{K}(0)\nabla \zeta_{j})=\text{div}((\tilde{K}(x)-\tilde{K}(0))\nabla \zeta_{j-1})\ \ \text{in}\ \Omega_0;\ \ \zeta_j=0\ \ \text{on}\ \partial\Omega_0
\end{equation}
and
\begin{equation*}
\zeta_{j}\in E^{n+2m+4j}_{k+2m+5j+2}(T_z)\oplus E^{n+2m+4j}_{k+2m+5j+3}(T_z)\oplus\cdots\oplus E^{n+2m+4j}_{n+2m+4j+l+2}(T_z)\oplus  C^{l+2,\gamma}(\overline{\Omega}_0).
\end{equation*}
In particular, $ \zeta_{n+l-k+1}\in  C^{l+2,\gamma}(\overline{\Omega}_0) $. Finally, we solve
\begin{equation}\label{2003}
-\text{div}(\tilde{K}(x)\nabla h)=\text{div}((\tilde{K}(x)-\tilde{K}(0))\nabla \zeta_{n+l-k+1})\ \ \text{in}\ \Omega_0;\ \ h=0\ \ \text{on}\ \partial\Omega_0.
\end{equation}
By the classical theory for elliptic equations, $ h\in C^{l+2,\gamma}(\overline{\Omega}_0). $

Denote $ \zeta=\sum_{i=0}^{n+l-k+1}\zeta_i+h\in E^{n+2m}_{k+2m+2}(T_z)\oplus E^{n+2m+4}_{k+2m+7}(T_z)\oplus\cdots\oplus E^{n+2m+4(n+l-k)}_{n+2m+4(n+l-k)+l+2}(T_z)\oplus  C^{l+2,\gamma}(\overline{\Omega}_0). $ By \eqref{2001}, \eqref{2002} and \eqref{2003}, one computes directly that
\begin{equation*}
-\text{div}(\tilde{K}(x)\nabla \zeta)=f\ \ \text{in}\ \Omega_0;\ \ \zeta=0\ \ \text{on}\ \partial\Omega_0.
\end{equation*}
Note that $ E^{n+2m+4(n+l-k-1)}_{n+2m+4(n+l-k-1)+l+1}(T_z)\oplus E^{n+2m+4(n+l-k)}_{n+2m+4(n+l-k)+l+2}(T_z)\subseteq  C^{l,\gamma}(\overline{\Omega}_0) $, therefore $ \zeta $ is the desired solution. For the case $ c(x) $ being a smooth function, we can use the Taylor's expansion as that in Lemma \ref{solu of Lap} to get the result. The proof is thus finished.

\end{proof}

We are now able to prove the expansion of Green's function when $ n $ is odd.

\textbf{Proof of Theorem \ref{thm1}:}
We extend  $ K(x):\Omega\to\mathbb{R}^{n\times n} $ to a positively definite  smooth matrix-valued function in $ \mathbb{R}^n $. This can be done since $ \Omega $ is smooth.  For any $ w\in \Omega $, define $ \Omega_w=\{x-w\mid x\in\Omega\} $ and $ T_w(\Omega)=\{T_w(x)\mid x\in\Omega\}. $ By the positively definiteness and smoothness of $ K $, we  can choose $ M>0 $ sufficiently large such that $ T_w(\Omega_w)\subseteq B_M(0) $ for all $ w\in\Omega. $

Now we give the expansion of $ G_K(x,y) $.  Note that $ \Phi_0\in E^n_2. $ For fixed $ y\in\Omega, $ we define $ \hat{\Phi}_0=\sqrt{\det K(y)}^{-1}\Phi_0\circ T_y\in E^n_2(T_y) $. Then   direct computation yields that $ -\text{div}(K(y)\nabla \hat{\Phi}_0)=\delta_0 $. Consider solutions $ \zeta_{y} $ of the following equations
\begin{equation}\label{111}
\begin{cases}
-\text{div}(\tilde{K}_y(x)\nabla\zeta_{y})=\text{div} ((\tilde{K}_y(x)-K(y))\nabla\hat{\Phi}_0), \ & x \in B_M(0),\\
\zeta_{y}=0,\ &x \in \partial B_M(0),
\end{cases}
\end{equation}
where the coefficient matrix $ \tilde{K}_y(\cdot)= K(\cdot+y) $. By assumptions, we have $ \tilde{K}_y\in C^{\infty}(\overline{B_M(0)},\mathbb{R}^{n\times n}) $ and $ \tilde{K}_y(0)=K(y) $. Moreover, one computes directly that for any $ l\in\mathbb{N} $
\begin{equation*}
\text{div}((\tilde{K}_y(x)-K(y))\nabla\hat{\Phi}_0)\in E^{n+2}_3(T_y)\oplus E^{n+2}_4(T_y)\oplus\cdots\oplus E^{n+2}_{n+2+l}(T_y)\oplus C^{l,\gamma}(\overline{B_M(0)}).
\end{equation*}
Therefore using Proposition \ref{solu of near Lap} with  $ z=y $, $ \Omega_0=B_M(0) $ and $ \tilde{K}=\tilde{K}_y $, we get the  unique solution $ \zeta_{y} $ of \eqref{111} such that
\begin{equation*}
\begin{split}
\zeta_{y}& =\sum_{i=1}^{n+l-2}\hat{\Phi}_i+h\\
&\in E^{n+2}_5(T_y)\oplus E^{n+6}_{10}(T_y)\oplus\cdots\oplus E^{n+2+4(n+l-3)}_{n+2+4(n+l-3)+l}(T_y)\oplus C^{l,\gamma}(\overline{B_M(0)}),
\end{split}
\end{equation*}
where $ \hat{\Phi}_i\in E^{n+2+4(i-1)}_{5i}(T_y) $ for $ i=1,\cdots,n+l-2 $ and $ h\in C^{l,\gamma}(\overline{B_M(0)}) $.

Define $ \eta_y(x)=G_{K}(x,y)-\hat{\Phi}_0(x-y)-\zeta_{y}(x-y) $ for any $ x\in \Omega $. It follows from the definition of $ G_K $ that
\begin{equation*}
\mathcal{L}_KG_K(x,y)=\delta_{y}, \ x\in \Omega;\ \ \  G_{K}(x,y)=0, \ x\in\partial \Omega,
\end{equation*}
which implies that for any $ x\in \Omega $
\begin{equation}\label{2006}
\begin{split}
\mathcal{L}_K \eta_y(x)=&\text{div}((\tilde{K}_y(x)-K(y))\nabla\hat{\Phi}_0)(x-y)+\text{div}(\tilde{K}_y(x)\nabla \zeta_{y})(x-y)=0.
\end{split}
\end{equation}
Here we have used \eqref{111}. Note that \eqref{2006} also holds in sense of distribution.  Thus by Weyl's lemma, $ \eta_y \in C^\infty(\overline{\Omega}) $ since it is smooth on $ \partial\Omega $. To conclude,  we have
\begin{equation}\label{2007}
G_{K}(x,y)=\hat{\Phi}_0(x-y)+\sum_{i=1}^{n+l-2}\hat{\Phi}_i(x-y)+H^l(x,y),
\end{equation}
where $ \hat{\Phi}_i\in E^{n+2+4(i-1)}_{5i}(T_y)$ for $  i=1,\cdots, n+l-2 $ and $ x\mapsto H^l(x,y)\in C^{l,\gamma}(\overline{\Omega})$. Define $ \Phi_i=\hat{\Phi}_i\circ T_y^{-1}\in E^{n+2+4(i-1)}_{5i} $. This combined with \eqref{2007} yields the expansion for $ G_K(x,y) $
\begin{equation*}
G_{K}(x,y)=\sqrt{\det K(y)}^{-1}\Phi_0(T_y(x-y))+ \sum_{i=1}^{n+l-2}\Phi_i(T_y(x-y))+H^l(x,y),
\end{equation*}
where $ x\mapsto H^l(x,y) \in C^{l,\gamma}(\overline{\Omega}) $.

We now prove the regularity of $ H^l $, i.e., $ H^l(x,y)=H^l_y(x)\in C\left (\Omega, C^{l,\gamma}\left (\overline{\Omega}\right )\right )\cap C^{l}( \Omega\times \Omega) $. Indeed, from the above construction of $ G_K $, we know that $ H^l(x,y) $ satisfies for all $ y\in\Omega $
\begin{equation}\label{100}
\begin{split}
\mathcal{L}_KH^l(x,y)=&\mathcal{L}_KG_{K}(x,y)-\mathcal{L}_K\hat{\Phi}_0(x-y)-\mathcal{L}_K\left( \sum_{i=1}^{n+l-2}\hat{\Phi}_i(x-y)\right) \\
=&\text{div} \left (\left (K(x)-K(y)\right )\nabla\hat{\Phi}_0(x-y)\right )+\text{div}\left (K(x)\nabla \left( \sum_{i=1}^{n+l-2}\hat{\Phi}_i(x-y)\right)\right ) \\
\in&E^{n+2+4(n+l-3)}_{n+1+4(n+l-3)+l}(T_y)(x-y)\oplus E^{n+2+4(n+l-3)}_{n+2+4(n+l-3)+l}(T_y)(x-y)\oplus C^{l,\gamma}(\overline{B_M(0)})(x-y).
\end{split}
\end{equation}
Here we denote $ E^{n+2+4(n+l-3)}_{n+1+4(n+l-3)+l}(T_y)(x-y)=\{f(x-y)\mid f\in E^{n+2+4(n+l-3)}_{n+1+4(n+l-3)+l}(T_y)\} $ and $ C^{l,\gamma}(\overline{B_M(0)})(x-y)=\{f(x-y)\mid f\in C^{l,\gamma}(\overline{B_M(0)})\} $. Moreover, it follows from $ G_K(\cdot,y)=0 $ on $ \partial\Omega $ that
\begin{equation*}
H^l(x,y)=-\hat{\Phi}_0(x-y)-\sum_{i=1}^{n+l-2}\hat{\Phi}_i(x-y) \ \ \ \  x\in\partial \Omega.
\end{equation*}
By the continuity of the right-hand side of \eqref{100} and the boundary condition with respect to $ y $ in $ C^{l-2,\gamma}(\Omega) $   and $ C^{l,\gamma}(\partial\Omega) $, respectively, we can get $ H^l(x,y) \in C(\Omega, C^{l,\gamma}(\overline{\Omega})) $ and thus $ \frac{\partial H^l(x,y) }{\partial x}, \frac{\partial^2 H^l(x,y) }{\partial x^2},\cdots,\frac{\partial^l H^l(x,y) }{\partial x^l}\in C(\Omega\times \Omega) $.

Similarly,  taking $ \nabla_y $ to both sides of \eqref{100}, we can check that $ \nabla_y H^l(x,y)\in C(\Omega, C^{l-1,\gamma}(\overline{\Omega})) $, which implies that $ \frac{\partial H^l(x,y) }{\partial y},\frac{\partial^2 H^l(x,y) }{\partial y\partial x},\cdots,\frac{\partial^l H^l(x,y) }{\partial y\partial x^{l-1}}\in C(\Omega\times \Omega) $. Moreover, we can get that for any integer $ k,m $ with $ 0\leq k\leq m\leq l $,
\begin{equation*}
 \frac{\partial^m H^l(x,y) }{\partial y^k\partial x^{m-k}}\in C(\Omega\times \Omega).
\end{equation*}
Thus we conclude that $ H^l(x,y) $ is a $ C^l $ function over $ \Omega\times \Omega $.

\qed

Note that the regular part of $ G_K $ and the associated Robin's function $ R_K $ are defined by $ H_K(x,y)=H^0(x,y) $ and $ R_K(x)=H_K(x,x)$ for any $ x ,y\in\Omega $.
Therefore by Theorem \ref{thm1}, for any $ l\in\mathbb{N}^* $
\begin{equation}\label{112}
H_K(x,y)=H^0(x,y)=\sum_{i=n-1}^{n+l-2}\Phi_i(T_y(x-y))+H^l(x,y),\ \ \forall x ,y\in\Omega.
\end{equation}

Now we can prove Theorem \ref{thm3} when $ n $ is odd.
\begin{proposition}\label{thm3-1}
Let $ n $ be odd, $ \Omega $ and $ K $ be as in Theorem \ref{thm3}. Then the Robin's function $ R_K(\cdot)\in C^\infty(\Omega) $.
\end{proposition}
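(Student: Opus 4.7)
The plan is to derive smoothness of $R_K$ directly from the expansion supplied by Theorem~\ref{thm1}, which grants regularity of $H^l$ of arbitrarily high order $l$. Using \eqref{112}, for each fixed $l \in \mathbb{N}^*$ we have
\[
H_K(x,y) = \sum_{i=n-1}^{n+l-2}\Phi_i(T_y(x-y)) + H^l(x,y), \qquad x,y\in\Omega,\ x\neq y,
\]
with $H^l \in C^l(\Omega\times\Omega)$. I would show that every singular term on the right-hand side vanishes on the diagonal $x=y$, so that the identity extends continuously to the whole of $\Omega\times\Omega$ and reduces to $R_K(x) = H^l(x,x)$; the conclusion then follows because $l$ is arbitrary.

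The key observation is a homogeneity count for the $\Phi_i$. By definition, $\Phi_i\in E^{n+2(2i-1)}_{5i}$ is a linear combination of terms $z^\alpha/|z|^{n+4i-2}$ with $|\alpha|=5i$, each positively homogeneous of degree $5i-(n+4i-2)=i+2-n$. For the range $i\geq n-1$ appearing in \eqref{112}, this degree is at least $1$, so $\Phi_i(z) = O(|z|^{i+2-n}) \to 0$ as $z\to 0$. Since $y\mapsto T_y$ is smooth and uniformly bounded on compact subsets of $\Omega$, the function $(x,y)\mapsto \Phi_i(T_y(x-y))$ extends continuously to the diagonal with value zero, uniformly in $y$.

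Both sides of the displayed identity are then continuous on $\Omega\times\Omega$ (recall $H_K\in C(\Omega\times\Omega)$ by Theorem~\ref{thm1}) and agree on the dense set $\{x\neq y\}$, hence agree everywhere. Restricting to $x=y$ yields
\[
R_K(x) = H_K(x,x) = H^l(x,x), \qquad x\in\Omega,
\]
for every $l\in\mathbb{N}^*$. Composition with the smooth diagonal inclusion $x\mapsto (x,x)$ preserves $C^l$ regularity, so $R_K\in C^l(\Omega)$; letting $l\to\infty$ gives $R_K\in C^\infty(\Omega)$.

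I do not expect a serious obstacle here, since the heavy lifting is already encoded in Theorem~\ref{thm1}. The one point requiring care is the verification that the singular pieces of the expansion drop out on the diagonal; this is exactly the homogeneity computation above, and the crucial structural input is that the sum in \eqref{112} begins at index $i=n-1$ (together with $n$ odd), precisely the threshold at which the degrees $i+2-n$ become positive. It is this positivity that lets $R_K$ inherit the full gain in regularity from the smoother remainder $H^l$, notwithstanding the fact that $H_K$ itself is only continuous on $\Omega\times\Omega$.
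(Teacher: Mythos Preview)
Your proposal is correct and follows essentially the same approach as the paper's proof: both use \eqref{112} together with the vanishing of $\Phi_i$ at the origin for $i\geq n-1$ to deduce $R_K(x)=H^l(x,x)$, and then conclude from $H^l\in C^l(\Omega\times\Omega)$ and the arbitrariness of $l$. Your homogeneity computation (degree $5i-(n+4i-2)=i+2-n\geq 1$ for $i\geq n-1$) is exactly the justification behind the paper's terse assertion ``$\Phi_i(0)=0$ for any $i\geq n-1$''; the only minor comment is that this positivity does not itself use the parity of $n$---the oddness enters earlier, in the construction of the expansion in Theorem~\ref{thm1}.
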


\begin{proof}
For any $ l\in\mathbb{N}^* $, it follows from \eqref{112} that $ R_K(x)=H^l(x,x) $ since $ \Phi_i(0)=0 $ for any $ i\geq n-1 $. So $ R_K\in C^l(\Omega) $. By the arbitrariness of $ l $, we get the result.
\end{proof}

\section{Even dimensional case: proof of Theorems \ref{thm2} and  \ref{thm3}}

Now we prove the even dimensional case. Let $ n\in 2\mathbb{N}^* $. Clearly from the definition of $ E^{n+2m,s}_{k+2m} $, we have  $ E^{n+2m}_{k+2m}= E^{n+2m,s}_{k+2m}\oplus \mathbb{R}[x] $ for any $ k\geq n, m\in\mathbb{N}. $
For $ L_m $, we first have
\begin{lemma}\label{le2-1}
For any $ m\in\mathbb{N}, $ $ L_m\subseteq \Delta(L_{m+2}\oplus \mathbb{R}[x]) $.
\end{lemma}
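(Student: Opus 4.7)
The plan is to make the ansatz $g_0 = p\, r^2 \log r$ for a suitably chosen homogeneous polynomial $p$ of degree $m$, observe that $\Delta g_0$ splits as an $L_m$-term plus a polynomial, and then absorb the polynomial remainder using the classical surjectivity of $\Delta$ on polynomials. The heart of the argument is a finite-dimensional linear algebra problem on homogeneous polynomials, which I solve via Fischer decomposition.

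By linearity it suffices to consider $f = q\log r$ for some $q \in P_m$, where $P_m$ denotes homogeneous polynomials of degree $m$. A direct Leibniz computation, using $\nabla(r^2\log r)=2x\log r + x$, $\Delta(r^2\log r)=2n\log r + (n+2)$, and the Euler identity $x\cdot\nabla p = m p$ for $p\in P_m$, yields
\begin{equation*}
\Delta\bigl(p\, r^2 \log r\bigr) \;=\; \bigl[r^2\Delta p + 2(2m+n)\,p\bigr]\log r \;+\; (2m+n+2)\,p.
\end{equation*}
Since $r^2\Delta p \in P_m$, the first summand lies in $L_m$ and the second in $\mathbb{R}[x]$. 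Introducing the linear operator
\begin{equation*}
T: P_m \to P_m,\qquad T(p) := r^2 \Delta p + 2(2m+n)\, p,
\end{equation*}
the formula becomes $\Delta(p\, r^2\log r) = T(p)\log r + (2m+n+2)\,p$. Thus, if $T$ is invertible, I can choose $p = T^{-1}(q)$ so that $g_0 := p\, r^2 \log r \in L_{m+2}$ satisfies $\Delta g_0 - f \in \mathbb{R}[x]$.

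To prove $T$ is invertible, I would use the Fischer decomposition $P_m = \bigoplus_{0 \leq 2j \leq m} r^{2j} H_{m-2j}$, with $H_k$ the space of harmonic homogeneous polynomials of degree $k$. A standard computation gives $\Delta(r^{2j} h) = 2j(2m + n - 2j - 2)\, r^{2j-2} h$ for $h \in H_{m-2j}$, so $T$ acts on $r^{2j} H_{m-2j}$ as multiplication by the scalar
\begin{equation*}
\lambda_j := 2j(2m+n-2j-2) + 2(2m+n).
\end{equation*}
Because $n\ge 2$ and $2j \leq m$, one has $2m + n - 2j - 2 \ge 0$, hence $\lambda_j \ge 2(2m+n) > 0$ for every admissible $j$. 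Therefore $T$ is invertible on $P_m$.

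Finally, $\Delta : P_{m+2} \to P_m$ is classically surjective (its kernel $H_{m+2}$ has codimension $\dim P_m$), so there exists $p_1 \in P_{m+2}\subseteq \mathbb{R}[x]$ with $\Delta p_1 = (2m+n+2)\,T^{-1}(q)$. Setting $g := g_0 - p_1 \in L_{m+2} \oplus \mathbb{R}[x]$ gives $\Delta g = f$, as required. The main obstacle is the invertibility of $T$ in the penultimate step: the strict positivity of the eigenvalues $\lambda_j$ is the analogue for the logarithmic case of the non-vanishing condition $2l+2-n-2m\neq 0$ used in the odd-dimensional Lemma~\ref{R of Laplacian }, and it is precisely the phenomenon that forces logarithms to appear in even dimensions in the first place.
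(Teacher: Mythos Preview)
Your proof is correct and complete. Both you and the paper start from the same ansatz $g_0=(\text{degree-}m\text{ polynomial})\cdot r^2\log r$ and both ultimately appeal to the surjectivity of $\Delta$ on polynomials to absorb the non-logarithmic remainder; the difference is in how the logarithmic part is handled. The paper uses the filtration $L_{m,l}=\mathrm{span}\{x^\alpha r^{2l}\log r:\ |\alpha|=m-2l\}$: for $f=x^\alpha r^{2l}\log r$ one computes $\Delta(r^2f)=(2l+2)(2l+|\alpha|+n)\,f+(\text{term in }L_{m,l+1})+(\text{polynomial})$, and since the leading coefficient never vanishes one finishes by decreasing induction on $|\alpha|$. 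Your Fischer decomposition $P_m=\bigoplus r^{2j}H_{m-2j}$ refines this filtration to a basis on which the operator $T=r^2\Delta+2(2m+n)\,\mathrm{id}$ is already diagonal with strictly positive eigenvalues $\lambda_j$, so you invert $T$ in one step rather than iterating. Your route is a bit cleaner and makes the key nonvanishing condition transparent; the paper's route is more elementary in that it stays at the level of monomials and does not invoke the harmonic decomposition. Both arguments express the same underlying fact that $p\mapsto r^2\Delta p$ is nilpotent (lower-triangular) on $P_m$, hence any positive shift of it is invertible.
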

\begin{proof}
We  prove it by a direct algebraic construction. Let $ l\in\mathbb{N}, $ $ 2l\leq m, $ and $ L_{m,l}=\text{span}\{x^\alpha r^{2l}\log r\mid |\alpha|=m-2l\} $. Then $ L_m=\sum_{0\leq 2l\leq m}L_{m,l}. $ For $ f=x^\alpha r^{2l}\log r\in L_{m,l} $, let $ g_1=r^2f\in L_{m+2,l+1} $, then one computes directly that
\begin{equation*}
\Delta g_1=r^{2l+2}\log r \Delta(x^\alpha)+(2|\alpha|+n+4l+2)x^\alpha r^{2l}+(2l+2)(2l+|\alpha|+n) f.
\end{equation*}
Let $ g_2  $ be a polynomial with order $ m+2 $ satisfying $ \Delta g_2=(2|\alpha|+n+4l+2)x^\alpha r^{2l}. $ Since $ (2l+2)(2l+|\alpha|+n)>0 $, we find that there exists $ g_3=\frac{1}{(2l+2)(2l+|\alpha|+n)}(g_1-g_2)\in L_{m+2}\oplus\mathbb{R}[x] $ such that
\begin{equation*}
\Delta g_3-f=\frac{1}{(2l+2)(2l+|\alpha|+n)}r^{2l+2}\log r \Delta(x^\alpha)\in L_{m,l+1}.
\end{equation*}
Thus we can get the result by decreasing induction on $ |\alpha|. $
\end{proof}

Let us  define
\begin{equation*}
\tilde{E}^{n+2m}_{k+2m}=
\begin{cases}
F^{n+2m}_{k+2m}\ &k < n,\\
F^{n+2m}_{k+2m}\oplus \mathbb{R}[x] \ &k\geq  n.
\end{cases}
\end{equation*}
Similar to Lemma \ref{R of Laplacian }, using the definition of $ F^{n+2m}_{k+2m} $ and $ \tilde{E}^{n+2m}_{k+2m} $, we get
\begin{lemma}\label{le2-2}
There holds
\begin{equation*}
\tilde{E}^{n+2m}_{k+2m}\subseteq \Delta (\tilde{E}^{n+2m}_{k+2m+2})\ \ \forall k\in\mathbb{N}^*,m \in\mathbb{N}.
\end{equation*}
\end{lemma}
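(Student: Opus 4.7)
The plan is to follow the strategy of Lemma \ref{R of Laplacian} while overcoming two features specific to even $n$: (i) the coefficient $2(2l+2-n-2m)(|\alpha|+l-m)$ appearing in the natural inversion $\bar g = r^2 f$ can vanish, which happens precisely at the bad level $l = n/2+m-1$; and (ii) $\tilde{E}^{n+2m}_{k+2m}$ now contains logarithmic and polynomial summands. A general $f \in \tilde{E}^{n+2m}_{k+2m}$ decomposes as $f = f_s + f_L + f_p$ with $f_s \in E^{n+2m,s}_{k+2m}$, $f_L \in L_{k-n}$, and $f_p \in \mathbb{R}[x]$ (the last two being absent when $k < n$); I would solve $\Delta g_\star = f_\star$ piece by piece and sum.

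The polynomial and logarithmic contributions are the easy ones. Since $\Delta\colon \mathbb{R}[x] \to \mathbb{R}[x]$ is surjective (via the harmonic decomposition $p = \sum_j r^{2j} h_j$), one obtains $g_p \in \mathbb{R}[x] \subseteq \tilde{E}^{n+2m}_{k+2m+2}$ with $\Delta g_p = f_p$. For $f_L$, Lemma \ref{le2-1} gives $g_L \in L_{k-n+2} \oplus \mathbb{R}[x]$; since $k \geq n$ here, $k+2-n \geq 2 > 0$ and hence $L_{k-n+2} \subseteq F^{n+2m}_{k+2m+2}$, so $g_L \in \tilde{E}^{n+2m}_{k+2m+2}$.

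For the singular part, by linearity it suffices to handle a single monomial $f_{s,l} = x^\alpha/r^{n+2m-2l}$ with $|\alpha| = k+2m-2l$. When $l \neq n/2+m-1$ both factors of the coefficient are nonzero — the second factor $|\alpha|+l-m = k+m-l$ satisfies $k+m-l \geq k/2 > 0$ because the constraint $2l \leq k+2m$ forces $l \leq k/2+m$ — so the proof of Lemma \ref{R of Laplacian} applies verbatim, and a downward induction on $|\alpha|$ produces $g \in E^{n+2m}_{k+2m+2} \subseteq \tilde{E}^{n+2m}_{k+2m+2}$.

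The heart of the matter, and the main obstacle, is the bad level $l = n/2+m-1$: there $r^2 f_{s,l} = x^\alpha$ is polynomial and its Laplacian cannot produce the $1/r^2$ singularity. The plan is to replace the natural ansatz by a logarithmic one based on the identity
\begin{equation*}
\Delta\bigl(x^\alpha \log r\bigr) = (\Delta x^\alpha)\log r + \bigl(2|\alpha|+n-2\bigr)\,\frac{x^\alpha}{r^2},
\end{equation*}
which follows from $\Delta \log r = (n-2)/r^2$ and $\nabla x^\alpha \cdot \nabla \log r = |\alpha| x^\alpha/r^2$. At the bad level $|\alpha| = k - n + 2$, so $2|\alpha|+n-2$ equals $2k$ when $n=2$ and is at least $n-2 \geq 2$ when $n \geq 4$; in both cases $k \geq 1$ makes it nonzero, and I would take $\bar g = (2|\alpha|+n-2)^{-1} x^\alpha \log r \in L_{k-n+2}$. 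The leftover $(2|\alpha|+n-2)^{-1}(\Delta x^\alpha)\log r \in L_{k-n}$ is then annihilated by a correction $h \in L_{k-n+2} \oplus \mathbb{R}[x]$ furnished by Lemma \ref{le2-1}, so $g = \bar g + h$ solves $\Delta g = f_{s,l}$ with $g \in L_{k-n+2} \oplus \mathbb{R}[x]$; since $k \geq n-2$ at the bad level, this target space is contained in $\tilde{E}^{n+2m}_{k+2m+2}$. The main technical burden is this bookkeeping of where the logarithmic and polynomial by-products land, and the enlargement of $F$ by an extra $\mathbb{R}[x]$ summand in the definition of $\tilde{E}$ for $k \geq n$ is exactly what makes the accounting close up.
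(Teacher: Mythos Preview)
Your proposal is correct and mirrors the paper's proof: handle the polynomial and logarithmic summands via surjectivity of $\Delta$ on $\mathbb{R}[x]$ and Lemma~\ref{le2-1}, and for the rational-singular part combine the $r^2 f$ reduction of Lemma~\ref{R of Laplacian } at good levels with the logarithmic ansatz $x^\alpha\log r$ at the bad level $l=n/2+m-1$. One small bookkeeping point: when $k\ge n$ your downward induction starting from a level $l<n/2+m-1$ must pass \emph{through} the bad level at $|\alpha|=k-n+2\ge 2$ before terminating, so the resulting preimage picks up a contribution from $L_{k-n+2}\oplus\mathbb{R}[x]$ and lies in $\tilde{E}^{n+2m}_{k+2m+2}$ rather than in $E^{n+2m}_{k+2m+2}$ as you wrote --- since you have already supplied the bad-level step, this does not affect the argument.
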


\begin{proof}
If $ k< n-2 $, then $ \tilde{E}^{n+2m}_{k+2m}=E^{n+2m}_{k+2m} $, $ \tilde{E}^{n+2m}_{k+2m+2}=E^{n+2m}_{k+2m+2} $. The results follows from Lemma \ref{R of Laplacian }.

When $ k=n-2 $ or $ n-1 $, let $ f\in E^{n+2m}_{k+2m,l} $ with $ 0\leq l\leq \frac{n-2+2m}{2}  $. If $ l< \frac{n-2+2m}{2} $, we can repeat the construction in Lemma \ref{R of Laplacian } with decreasing induction on $ |\alpha| $ to get the results. The only one case we need to pay attention to is $ 2l=n-2+2m $, i.e., $ f=\frac{x^\alpha}{r^2} $ with $ |\alpha|=k-n+2\leq 1. $ Let $ g=x^\alpha\log r \in L_{k-n+2}\subseteq \tilde{E}^{n+2m}_{k+2m+2} $. One computes directly that $ \Delta g=(2|\alpha|+n-2)f. $ Note that $ 2|\alpha|+n-2> 0 $ for $ n\geq 4. $ For $ n=2, $ only the case $ |\alpha|=1 $  occurs since $ k\geq 1 $. Thus $ 2|\alpha|+n-2\ne 0 $ and we get the results.

For the case $ k\geq n, $ if $ f\in L_{k-n}\oplus \mathbb{R}[x] $, the result is shown by Lemma \ref{le2-1}. Let $ f\in E^{n+2m}_{k+2m, l} $ for some $ 0\leq 2l\leq k+2m $.  If   $ 2l< n-2+2m $, we can get the result by decreasing induction on $ |\alpha| $. If $ 2l\geq n+2m $, then $ f $ is a polynomial with order $ k-n $. So there exists a polynomial $ g\in\mathbb{R}[x]\subseteq \tilde{E}^{n+2m}_{k+2m+2} $ such that $ \Delta g=f. $ It remains to solve the case $ 2l=n-2+2m $, i.e., $ f=\frac{x^\alpha}{r^2} $ with $ |\alpha|=k-n+2. $ Let $ g_0=x^\alpha\log r\in L_{k+2-n}. $ Then
\begin{equation*}
\Delta g_0=(2|\alpha|+n-2)\frac{x^\alpha}{r^2}+\log r\times \Delta(x^\alpha),
\end{equation*}
which implies that $ \Delta(\frac{1}{2|\alpha|+n-2}g_0)-f\in L_{k-n} $. Since $ L_{k-n}\subseteq \Delta(L_{k-n+2}\oplus \mathbb{R}[x]) $, we get the existence of $ g\in\mathbb{R}[x]\subseteq \tilde{E}^{n+2m}_{k+2m+2} $ such that $ \Delta g=f. $

\end{proof}

Let $ z\in\Omega $ be fixed and $ T_z $ be the positively definite matrix defined by \eqref{matrix T1}. We define
\begin{equation*}
F^{n+2m}_{k+2m}(T_z)=\{f\circ T_z\mid f \in F^{n+2m}_{k+2m}\};\ \  \tilde{E}^{n+2m}_{k+2m}(T_z)=\{f\circ T_z\mid f \in \tilde{E}^{n+2m}_{k+2m}\}.
\end{equation*}
A direct consequence of Lemma \ref{le2-2} is
\begin{lemma}\label{le2-2-2}
	Let $ n $ be even and $ z\in\Omega $ be fixed. Then for any $ k\in\mathbb{N}^*, m\in \mathbb{N}$
	\begin{equation*}
	\tilde{E}^{n+2m}_{k+2m}(T_z)\subseteq -\text{div}(K(z)\nabla)(\tilde{E}^{n+2m}_{k+2m+2}(T_z)).
	\end{equation*}
\end{lemma}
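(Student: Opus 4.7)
The strategy is a direct transplantation of Lemma \ref{le2-2} to the anisotropic operator $-\text{div}(K(z)\nabla\,\cdot\,)$ via the linear change of variables $y = T_z x$, in exact parallel with how Lemma \ref{R of Laplacian2} was deduced from Lemma \ref{R of Laplacian } in the odd case.

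Given $f \in \tilde{E}^{n+2m}_{k+2m}(T_z)$, the first step is to unwind the definition to write $f = \hat{f}\circ T_z$ for some $\hat{f} \in \tilde{E}^{n+2m}_{k+2m}$. I would then apply Lemma \ref{le2-2} to obtain $\hat{g} \in \tilde{E}^{n+2m}_{k+2m+2}$ with $-\Delta \hat{g} = \hat{f}$, and set $g := \hat{g}\circ T_z$. By the very definition of $\tilde{E}^{n+2m}_{k+2m+2}(T_z)$, one immediately has $g \in \tilde{E}^{n+2m}_{k+2m+2}(T_z)$, so that only the PDE identity $-\text{div}(K(z)\nabla g) = f$ remains to be checked.

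This last verification is the standard chain-rule computation: since $\nabla_x g(x) = T_z^{t}(\nabla_y\hat{g})(T_z x)$,
\[
\text{div}_x\bigl(K(z)\nabla_x g(x)\bigr) = \text{tr}\bigl(T_z K(z) T_z^{t}\,(\nabla_y^2 \hat{g})(T_z x)\bigr),
\]
and the defining relation $T_z^{-1}(T_z^{-1})^{t} = K(z)$ forces $T_z K(z) T_z^{t} = I_{n\times n}$. Hence $\text{div}_x(K(z)\nabla g) = (\Delta \hat{g})\circ T_z$, and therefore $-\text{div}(K(z)\nabla g) = \hat{f}\circ T_z = f$. The only place where one might a priori worry is the interaction of $T_z$ with the logarithmic factors $\log r$ built into $\tilde{E}$, but this is sidestepped by the very choice to define $\tilde{E}^{n+2m}_{k+2m}(T_z)$ as the literal pullback by $T_z$, so no re-expansion in the original coordinates is required. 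The entire substance of the lemma is thus carried by Lemma \ref{le2-2}, and I do not expect any genuine obstacle beyond bookkeeping.
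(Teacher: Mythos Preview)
Your proposal is correct and is exactly the argument the paper has in mind: the paper states Lemma~\ref{le2-2-2} as ``a direct consequence of Lemma~\ref{le2-2}'' and relies on the same change-of-variables computation already spelled out for the odd case in Lemma~\ref{R of Laplacian2}. Your chain-rule verification and your remark that the definition of $\tilde{E}^{n+2m}_{k+2m}(T_z)$ as a pullback makes the logarithmic terms harmless are both on point; there is nothing further to add.
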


Based on Lemma \ref{le2-2-2}, we get the existence of solutions to elliptic equations in divergence form  with constant coefficient matrix $ K(z) $ when $ f\in F^{n+2m}_{k+2m}(T_z) $.

\begin{lemma}\label{le2-3}
Let $ n $ be even  and $ z\in\Omega $ be fixed. Let $ \Omega_0 $ be a bounded smooth domain in $ \mathbb{R}^n $ containing the origin $ 0 $ and $ c $ be a smooth function defined on $ \overline{\Omega}_0. $ Then for any $ k\in\mathbb{N}^*, m,l\in \mathbb{N}$, $ f\in F^{n+2m}_{k+2m}(T_z) $, there exists
	\begin{equation*}
	g\in F^{n+2m}_{k+2m+2}(T_z)\oplus F^{n+2m}_{k+2m+3}(T_z)\oplus\cdots\oplus F^{n+2m}_{n+2m+l}(T_z)\oplus  C^{l,\gamma}(\overline{\Omega}_0)
	\end{equation*}
	such that
	\begin{equation*}
-\text{div}(K(z)\nabla g)=c(x)f\ \ \text{in}\ \Omega_0;\ \ g=0\ \ \text{on}\ \partial\Omega_0.
	\end{equation*}
Moreover, the expansion of $ g $ is unique.
\end{lemma}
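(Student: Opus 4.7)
The approach is to mirror the proof of Lemma \ref{solu of Lap}, substituting the even-dimensional algebraic Lemma \ref{le2-2-2} for Lemma \ref{R of Laplacian2} and keeping careful track of the extra logarithmic and polynomial bookkeeping encoded in $F$ versus $\tilde E$. First, I would Taylor-expand $c$ about $0$ to order $n+l-k$,
\begin{equation*}
c(x) = \sum_{j=0}^{n+l-k}\frac{1}{j!}[x\cdot\nabla]^j c(0) + \frac{1}{(n+l-k)!}\int_0^1(1-t)^{n+l-k}[x\cdot\nabla]^{n+l-k+1}c(tx)\,dt,
\end{equation*}
where the remainder lies in $C^{l,\gamma}(\overline{\Omega}_0)$. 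The key algebraic observation, to be verified in a short calculation, is that multiplication by a linear form sends $F^{n+2m}_{j+2m}(T_z)$ into $\tilde E^{n+2m}_{j+2m+1}(T_z)$ (logarithmic terms produce further logarithmic terms plus polynomial residues, and singular monomials $x^\alpha/r^{n+2m}$ acquire one more factor $x_i$); iterating on $j$, I obtain
\begin{equation*}
c(x)f \in \tilde E^{n+2m}_{k+2m}(T_z) \oplus \tilde E^{n+2m}_{k+2m+1}(T_z) \oplus \cdots \oplus \tilde E^{n+2m}_{n+2m+l}(T_z) \oplus C^{l,\gamma}(\overline{\Omega}_0).
\end{equation*}

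Next, I apply Lemma \ref{le2-2-2} componentwise: for each $\tilde E^{n+2m}_{j+2m}(T_z)$ summand it produces a preimage under $-\mathrm{div}(K(z)\nabla\cdot)$ in $\tilde E^{n+2m}_{j+2m+2}(T_z)$, and the $C^{l,\gamma}$ remainder is inverted by classical Schauder theory to give a $C^{l+2,\gamma}(\overline{\Omega}_0)$ term. Summing, I get a particular solution
\begin{equation*}
g_1 \in \tilde E^{n+2m}_{k+2m+2}(T_z) \oplus \cdots \oplus \tilde E^{n+2m}_{n+2m+l+2}(T_z) \oplus C^{l+2,\gamma}(\overline{\Omega}_0)
\end{equation*}
with $-\mathrm{div}(K(z)\nabla g_1) = c(x)f$ in $\Omega_0$. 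To correct the boundary values, I use that $0 \in \Omega_0$ so $\partial\Omega_0$ stays away from the origin; consequently every element of $\tilde E^{n+2m}_{j+2m}(T_z)$ (including logarithmic and polynomial pieces) is $C^\infty$ on $\partial\Omega_0$. Solving $-\mathrm{div}(K(z)\nabla g_2) = 0$ with $g_2|_{\partial\Omega_0} = g_1|_{\partial\Omega_0}$ yields a smooth $g_2$ by classical theory, and $g := g_1 - g_2$ has zero trace.

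Finally, I would absorb the last two summands $\tilde E^{n+2m}_{n+2m+l+1}(T_z)\oplus \tilde E^{n+2m}_{n+2m+l+2}(T_z)$ into the $C^{l,\gamma}$ part, since their basis elements $x^\alpha/r^{n+2m}$ (respectively $x^\alpha r^{2l'}\log r$ and polynomial generators) are of class $C^{l,\gamma}$ on $\overline{\Omega}_0$ by a direct scaling count, and absorb the purely polynomial $\mathbb{R}[x]$ part that enlarges $F$ to $\tilde E$ (present only when $k\ge n$) into the same $C^{l,\gamma}$ summand, since polynomials are smooth. This recovers the stated direct sum involving $F^{n+2m}_{\cdot}(T_z)$ rather than $\tilde E$. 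Uniqueness of the decomposition follows at once from the maximum principle together with the direct-sum structure of the target space: any two decompositions would differ by a function annihilated by $-\mathrm{div}(K(z)\nabla\cdot)$ with zero boundary value, hence identically zero, forcing each component to vanish.

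The step I expect to be the main obstacle is not the analytic part but precisely the algebraic bookkeeping in paragraph one: verifying that multiplication by Taylor monomials respects the $\tilde E$-grading and that the polynomial residues always land inside $\mathbb{R}[x] \subseteq \tilde E$, so that Lemma \ref{le2-2-2} is applicable at every step and no unintended terms spoil the final $F$-decomposition.
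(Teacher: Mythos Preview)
Your proposal is correct and follows essentially the same route as the paper: Taylor-expand $c$, invert the resulting pieces termwise via Lemma~\ref{le2-2-2}, subtract a $K(z)$-harmonic boundary correction, and absorb the two highest-order summands together with any polynomial residues into $C^{l,\gamma}$. The only cosmetic difference is that the paper asserts $c(x)f\in F^{n+2m}_{k+2m}(T_z)\oplus\cdots\oplus C^{l,\gamma}$ directly (implicitly shoving the polynomial pieces you worry about into the $C^{l,\gamma}$ summand from the outset), whereas you track them through $\tilde E$ and absorb at the end; both bookkeepings are valid and lead to the same conclusion.
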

\begin{proof}
Using Taylor's formula, we have
	\begin{equation*}
	c(x)f\in F^{n+2m}_{k+2m}(T_z)\oplus F^{n+2m}_{k+2m+1}(T_z)\oplus\cdots\oplus F^{n+2m}_{n+2m+l}(T_z)\oplus  C^{l,\gamma}(\overline{\Omega}_0).
	\end{equation*}
By Lemma \ref{le2-2-2}, we get the existence of $ \bar{g}_1\in \tilde{E}^{n+2m}_{k+2m+2}(T_z)\oplus \tilde{E}^{n+2m}_{k+2m+3}(T_z)\oplus\cdots\oplus \tilde{E}^{n+2m}_{n+2m+l+2}(T_z)\oplus  C^{l+2,\gamma}(\overline{\Omega}_0) $ such that $ -\text{div}(K(z)\nabla \bar{g}_1)=f $. So $ \bar{g}_1\in F^{n+2m}_{k+2m+2}(T_z)\oplus F^{n+2m}_{k+2m+3}(T_z)\oplus\cdots\oplus F^{n+2m}_{n+2m+l+2}(T_z)\oplus  C^{l+2,\gamma}(\overline{\Omega}_0) $. Let $ \bar{g}_2\in C^{\infty}(\overline{\Omega}_0) $ be a function  satisfying $ -\text{div}(K(z)\nabla \bar{g}_2)=0  $ in $ \Omega_0 $ and  $ \bar{g}_2=\bar{g}_1 $ on $ \partial \Omega_0 $. Since $ F^{n+2m}_{n+2m+l+1}(T_z)\oplus F^{n+2m}_{n+2m+l+2}(T_z)\subseteq C^{l,\gamma}(\overline{\Omega}_0) $, we conclude that $ g=\bar{g}_1-\bar{g}_2 $ is the desired solution.

\end{proof}

Then, we further extend  results of Lemma \ref{le2-3} to  elliptic operators in divergence form, which is close to $ -\text{div}(K(z)\nabla ) $ near the origin $ 0 $ when $ n $ is even.

\begin{proposition}\label{le2-4}
Let $ n $ be even, $ z\in\Omega $, $ \Omega_0 $    and $ c  $ be as in Lemma \ref{le2-3}. Let $ \tilde{K}\in C^{\infty}(\overline{\Omega}_0, \mathbb{R}^{n\times n}) $ be a positively definite matrix-valued function with $ \tilde{K}(0)=K(z). $ Then for any $ k\in\mathbb{N}^*, m,l\in \mathbb{N}$, $ f\in F^{n+2m}_{k+2m}(T_z) $, there exists
	\begin{equation*}
	\xi\in F^{n+2m}_{k+2m+2}(T_z)\oplus F^{n+2m+4}_{k+2m+7}(T_z)\oplus\cdots\oplus F^{n+2m+4(n+l-k-2)}_{n+2m+l+4(n+l-k-2)}(T_z)\oplus  C^{l,\gamma}(\overline{\Omega}_0)
	\end{equation*}
	such that
	\begin{equation*}
	-\text{div}(\tilde{K}(x)\nabla \xi)=c(x)f\ \ \text{in}\ \Omega_0;\ \ \xi=0\ \ \text{on}\ \partial\Omega_0.
	\end{equation*}
Moreover, the expansion of $ \zeta $ is unique.
\end{proposition}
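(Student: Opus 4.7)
My plan is to adapt the proof of Proposition \ref{solu of near Lap} from the odd-dimensional setting, replacing the lattice of $E$-spaces by the lattice of $F$-spaces (which now carries the extra logarithmic summands $L_{k-n}$ once $k\geq n$) and invoking Lemma \ref{le2-3} in place of Lemma \ref{solu of Lap}. As in the odd case, it suffices to treat $c\equiv 1$: a general smooth $c$ is handled afterwards by Taylor expanding $c$ about the origin, which distributes $c(x)f$ across the successive spaces $F^{n+2m}_{k+2m+j}(T_z)$ for $j=0,1,\dots,n+l-k$ modulo a $C^{l,\gamma}(\overline{\Omega}_0)$ remainder, so that solvability for each summand suffices.

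Assuming $c\equiv 1$, I will build a finite corrector sequence $\{\xi_j\}_{j=0}^{n+l-k+1}$. First take $\xi_0$ to be the solution provided by Lemma \ref{le2-3} of the frozen-coefficient problem $-\text{div}(\tilde{K}(0)\nabla \xi_0)=f$ with $\xi_0=0$ on $\partial\Omega_0$, which lands in $F^{n+2m}_{k+2m+2}(T_z)\oplus\cdots\oplus F^{n+2m}_{n+2m+l+2}(T_z)\oplus C^{l+2,\gamma}(\overline{\Omega}_0)$. Recursively, define $\xi_j$ by $-\text{div}(\tilde{K}(0)\nabla \xi_j)=\text{div}\bigl((\tilde{K}(x)-\tilde{K}(0))\nabla \xi_{j-1}\bigr)$ with zero boundary data. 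Since $\tilde{K}(x)-\tilde{K}(0)$ vanishes at the origin, its Taylor expansion multiplies each singular or logarithmic term in $\nabla \xi_{j-1}$ by at least one factor of $x$, while the two derivatives in $\text{div}(\cdot\,\nabla)$ raise the denominator exponent by two and shift the numerator degree accordingly; tracing the indices yields the pair $(n+2m,k+2m+2)\mapsto(n+2m+4,k+2m+7)$ at each iteration, so another application of Lemma \ref{le2-3} places $\xi_j$ inside $F^{n+2m+4j}_{k+2m+2+5j}(T_z)\oplus\cdots$ modulo $C^{l+2,\gamma}(\overline{\Omega}_0)$. After $j=n+l-k+1$ steps the forcing is entirely Hölder, hence $\xi_{n+l-k+1}\in C^{l+2,\gamma}(\overline{\Omega}_0)$.

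To close the iteration I would solve the genuine (not frozen) equation $-\text{div}(\tilde{K}(x)\nabla h)=\text{div}\bigl((\tilde{K}(x)-\tilde{K}(0))\nabla \xi_{n+l-k+1}\bigr)$ with zero boundary data, obtaining $h\in C^{l+2,\gamma}(\overline{\Omega}_0)$ by standard Schauder theory for divergence-form elliptic equations. Setting $\xi=\sum_{j=0}^{n+l-k+1}\xi_j+h$ produces a telescoping identity that verifies $-\text{div}(\tilde{K}(x)\nabla \xi)=f$ in $\Omega_0$ with $\xi=0$ on $\partial\Omega_0$; the last two $F$-summands ($j=n+l-k-1,n+l-k$) are absorbed into $C^{l,\gamma}(\overline{\Omega}_0)$ to match the claimed decomposition. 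Uniqueness of the expansion is then obtained by successively subtracting the most singular term from two candidate expansions and applying the maximum principle in $\Omega_0$ to the constant-coefficient operator $-\text{div}(\tilde{K}(0)\nabla\cdot)$, using the fact that the sum of $F$-spaces in the statement is direct.

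The principal obstacle I foresee is verifying, purely algebraically, that the shift $(n+2m,k+2m+2)\mapsto(n+2m+4,k+2m+7)$ really does send $F$-classes into $F$-classes in the presence of logarithmic terms. Concretely, one must check that derivatives of $x^\alpha\log r$ split cleanly into a piece in $L_{|\alpha|-1}$ plus a singular rational piece living in an appropriate $E^{n+2m',s}_{\cdot}$, and that multiplication by smooth components of $\tilde{K}(x)-\tilde{K}(0)$ preserves the splitting $F^{\cdot}=E^{\cdot,s}\oplus L_{\cdot}$ modulo harmless polynomial pieces that can be absorbed into $\mathbb{R}[x]\subset C^{l,\gamma}(\overline{\Omega}_0)$. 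This is a bookkeeping exercise based on Definition \ref{def of E} and the definition of $F^{n+2m}_{k+2m}$ rather than a new analytic difficulty, and once it is carried out the iteration scheme proceeds exactly as in the odd-dimensional case.
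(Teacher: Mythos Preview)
Your proposal is correct and follows essentially the same approach as the paper's own proof: both freeze the coefficients at the origin, build the corrector sequence $\{\xi_j\}$ by iterated application of Lemma~\ref{le2-3}, track the index shift $(n+2m,k+2m+2)\mapsto(n+2m+4,k+2m+7)$ at each step, solve the final Schauder problem for $h$ with the true variable coefficients, and sum. The ``principal obstacle'' you flag---checking that differentiation and multiplication by $\tilde K(x)-\tilde K(0)$ carry $F$-classes into $F$-classes in the presence of logarithmic terms---is exactly the step the paper leaves implicit (it simply asserts the membership $\text{div}((\tilde K(x)-\tilde K(0))\nabla\xi_0)\in F^{n+2m+4}_{k+2m+5}(T_z)\oplus\cdots$), so your identification of it as routine bookkeeping is consistent with the paper's treatment.
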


\begin{proof}
The idea is similar to the proof of Proposition \ref{solu of near Lap} and we prove it here just for completeness. Consider the case $ c\equiv 1 $. To find a solution $ \xi $, we construct a sequence of $ \{\xi_j\}_{j=0}^{n+l-k+1}. $ First, let   $ \xi_0 $ satisfy
	\begin{equation*}
	-\text{div}(\tilde{K}(0)\nabla \xi_0)=f\ \ \text{in}\ \Omega_0;\ \ \xi_0=0\ \ \text{on}\ \partial\Omega_0.
	\end{equation*}
It follows from Lemma \ref{le2-3} that
	\begin{equation*}
\xi_0\in F^{n+2m}_{k+2m+2}(T_z)\oplus F^{n+2m}_{k+2m+3}(T_z)\oplus\cdots\oplus F^{n+2m}_{n+2m+l+2}\oplus  C^{l+2,\gamma}(\overline{\Omega}_0),
	\end{equation*}
from which we deduce,
	\begin{equation*}
	\begin{split}
	\text{div}((\tilde{K}(x)-\tilde{K}(0))\nabla \xi_0)&=\sum_{i,j=1}^n(\tilde{K}_{ij}(x)-\tilde{K}_{ij}(0))\partial_{ij}\xi_0+\sum_{i,j=1}^n\partial_i\tilde{K}_{ij}(x)\partial_{j}\xi_0\\
	&\in F^{n+2m+4}_{k+2m+5}(T_z)\oplus F^{n+2m+4}_{k+2m+6}(T_z)\oplus\cdots\oplus F^{n+2m+4}_{n+2m+l+4}(T_z)\oplus  C^{l,\gamma}(\overline{\Omega}_0).
	\end{split}
	\end{equation*}
	Let $ \xi_1 $ satisfy
	\begin{equation*}
	-\text{div}(\tilde{K}(0)\nabla \xi_1)=\text{div}((\tilde{K}(x)-\tilde{K}(0))\nabla \xi_0)\ \ \text{in}\ \Omega_0;\ \ \xi_1=0\ \ \text{on}\ \partial\Omega_0.
	\end{equation*}
	By Lemma \ref{le2-3}, $ \xi_1\in F^{n+2m+4}_{k+2m+7}(T_z)\oplus F^{n+2m+4}_{k+2m+8}(T_z)\oplus\cdots\oplus F^{n+2m+4}_{n+2m+l+6}(T_z)\oplus  C^{l+2,\gamma}(\overline{\Omega}_0). $
This yields that $ 	\text{div}((\tilde{K}(x)-\tilde{K}(0))\nabla \xi_1)\in F^{n+2m+8}_{k+2m+10}(T_z)\oplus F^{n+2m+8}_{k+2m+11}(T_z)\oplus\cdots\oplus F^{n+2m+8}_{n+2m+l+8}(T_z)\oplus  C^{l,\gamma}(\overline{\Omega}_0)  $ and so on.
Hence we can find $ \xi_{j} $ for $ j=1,\cdots, n+l-k+1 $ satisfying
	\begin{equation*}
	-\text{div}(\tilde{K}(0)\nabla \xi_{j})=\text{div}((\tilde{K}(x)-\tilde{K}(0))\nabla \xi_{j-1})\ \ \text{in}\ \Omega_0;\ \ \xi_j=0\ \ \text{on}\ \partial\Omega_0
	\end{equation*}
	and
	\begin{equation*}
	\xi_{j}\in F^{n+2m+4j}_{k+2m+5j+2}(T_z)\oplus F^{n+2m+4j}_{k+2m+5j+3}(T_z)\oplus\cdots\oplus F^{n+2m+4j}_{n+2m+4j+l+2}(T_z)\oplus  C^{l+2,\gamma}(\overline{\Omega}_0).
	\end{equation*}
	In particular, $ \xi_{n+l-k+1}\in  C^{l+2,\gamma}(\overline{\Omega}_0) $. Finally, we solve $ h\in C^{l+2,\gamma}(\overline{\Omega}_0)  $ satisfying
	\begin{equation*}
	-\text{div}(\tilde{K}(x)\nabla h)=\text{div}((\tilde{K}(x)-\tilde{K}(0))\nabla \xi_{n+l-k+1})\ \ \text{in}\ \Omega_0;\ \ h=0\ \ \text{on}\ \partial\Omega_0.
	\end{equation*}
Denote $ \xi=\sum_{i=0}^{n+l-k+1}\xi_i+h\in F^{n+2m}_{k+2m+2}(T_z)\oplus F^{n+2m+4}_{k+2m+7}(T_z)\oplus\cdots\oplus F^{n+2m+4(n+l-k)}_{n+2m+4(n+l-k)+l+2}(T_z)\oplus  C^{l+2,\gamma}(\overline{\Omega}_0). $ One computes directly that
	\begin{equation*}
	-\text{div}(\tilde{K}(x)\nabla \xi)=f\ \ \text{in}\ \Omega_0;\ \ \zeta=0\ \ \text{on}\ \partial\Omega_0.
	\end{equation*}
Thus $ \xi $ is the desired solution. For the case $ c $ being a smooth function, we can use the same Taylor's expansion as in Lemma \ref{le2-3} to get the result.

\end{proof}

\textbf{Proof of Theorem \ref{thm2}:}
Based on Proposition \ref{le2-4},  we can use similar method as that in Theorem \ref{thm1} to get the expansion of  Green's function $ G_K $ of the operator $ \mathcal{L}_K $ in case that $ n $ is even, just by replacing $ E^{n+2m}_{k+2m} $  with $ F^{n+2m}_{k+2m} $. We omit the proof here.
\qed

Similar to the odd dimensional case, we define the regular part of $ G_K $ by $ H_K(x,y)=H^0(x,y) $ for any $ x ,y\in\Omega $
and  Robin's function $ R_K(x)=H_K(x,x)$ for any $ x\in\Omega. $ By Theorem \ref{thm2}, for any $ l\in\mathbb{N}^* $
\begin{equation}\label{200}
H_K(x,y)= \sum_{i=n-1}^{n+l-2}\Phi_i(T_y(x-y))+H^l(x,y),\ \ \forall x ,y\in\Omega.
\end{equation}

Since $ \Phi_i(0)=0 $ for any $ i\geq n-1 $, we can get that $ R_K $ is smooth when $ n $ is even, which is summarized as follows.
\begin{proposition}\label{thm3-2}
Let $ n $ be even, $ \Omega $ and $ K $ be as in Theorem \ref{thm3}. Then  Robin's function $ R_K(\cdot)\in C^\infty(\Omega) $.
\end{proposition}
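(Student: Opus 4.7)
The plan is to imitate verbatim the short argument used for the odd dimensional case in Proposition \ref{thm3-1}: read off $R_K(x)$ from the diagonal of the expansion \eqref{200}, and show that every $\Phi_i$ occurring there extends continuously to $0$ at the origin, so the diagonal value reduces to $H^l(x,x)$, which is already $C^l(\Omega)$ by Theorem \ref{thm2}. Arbitrariness of $l$ then upgrades $C^l$ to $C^\infty$.

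First I would fix $l\in\mathbb{N}^*$ and write, via \eqref{200},
\begin{equation*}
H_K(x,y)=\sum_{i=n-1}^{n+l-2}\Phi_i\bigl(T_y(x-y)\bigr)+H^l(x,y),
\end{equation*}
with $\Phi_i\in F^{n+2(2i-1)}_{i+2+2(2i-1)}$ and $H^l\in C^l(\Omega\times\Omega)$. Setting $x=y$ sends the argument $T_y(x-y)$ to $0\in\mathbb{R}^n$, and the summation index $i\geq n-1$ means the order $k=i+2$ of each $\Phi_i$ satisfies $k\geq n+1$. I then check that both families of generators of $F^{n+2m}_{k+2m}$ with $k\geq n+1$ extend continuously to $0$ at the origin with value $0$. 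For a rational generator $z^\alpha/r^{n+2m}$ with $|\alpha|=k+2m$, homogeneity of degree $k-n\geq 1$ immediately gives the limit $0$ at the origin. For a logarithmic generator $z^\alpha\log r\in L_{k-n}$ with $|\alpha|=k-n\geq 1$, the bound $|z^\alpha\log r|\leq|z|^{k-n}|\log|z||\to 0$ as $z\to 0$ does the same. Consequently every $\Phi_i$ in the sum vanishes on the diagonal and $R_K(x)=H_K(x,x)=H^l(x,x)\in C^l(\Omega)$.

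The only wrinkle, and the unique place at which the even case departs from the odd one, is the presence of the logarithmic pieces $x^\alpha\log r$ in $F^{n+2m}_{k+2m}$ for $k\geq n$: since $\log r$ is singular at $r=0$, a separate limit argument is needed to assign the value $0$ to these terms on the diagonal, but the growth estimate above makes this routine. Once the vanishing on the diagonal is in place, the final step is simply to note that $l$ was arbitrary, so $R_K\in\bigcap_{l\in\mathbb{N}^*}C^l(\Omega)=C^\infty(\Omega)$.
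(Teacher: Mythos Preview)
Your argument is correct and follows exactly the paper's approach: the paper simply notes that $\Phi_i(0)=0$ for $i\ge n-1$ and invokes the $C^l$-regularity of $H^l$ from Theorem~\ref{thm2}, concluding by arbitrariness of $l$. You have merely (and usefully) spelled out why the logarithmic generators $z^\alpha\log r$ with $|\alpha|=k-n\ge 1$ also vanish at the origin, which is the only extra check compared to the odd case.
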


\textbf{Proof of Theorem \ref{thm3}:} Using Propositions \ref{thm3-1} and \ref{thm3-2}, we finish the proof of Theorem \ref{thm3}.
\qed

\begin{remark}
We give a final remark that the smoothness of $ K $ and $ \Omega $ is indeed used to obtain the smoothness of Robin's function $ R_K $. If we just need to get
	$ R_K\in C^l $ for some $ l $, the assumptions on $ K $ and $ \Omega $ can be  weakened.
	
\end{remark}

\section*{Appendix}
In this Appendix, we give proof of  the expansion of Green's function of $ \mathcal{L}_K $ when the coefficient matrix $ K $ is a constant coefficient matrix $ K_0 $.

\begin{customthm}{A.1}\label{lemA-1}
Let $ T_0 $ be the  positively definite matrix satisfying
 $$ T_0^{-1}T_0^{-t}=K_0.$$
 Denote $ T_0\Omega=\{T_0x\mid x\in\Omega\} $.
 Let $ G_{K_0} $ be the Green's function of $ \mathcal{L}_{K_0} $ in $ \Omega $ with zero Dirichlet boundary condition. Then
 \begin{equation}\label{2004}
 G_{K_0}(x,y)=\sqrt{\det K_0}^{-1}\Phi_0(T_0(x-y))-\sqrt{\det K_0}^{-1}H_{T_0\Omega}(T_0x,T_0y)\ \ \forall x,y\in\Omega, x\neq y,
 \end{equation}
 where $ H_{T_0\Omega}: T_0\Omega\times T_0\Omega\to\mathbb{R} $ is the regular part   satisfying for any $ y'\in T_0\Omega $
 \begin{equation}\label{2005}
 \begin{cases}
 -\Delta H_{T_0\Omega}(x',y')=0,\ \  &x'\in T_0\Omega,\\
 H_{T_0\Omega}(x',y')=\Phi_0(x'-y'),\ \ &x'\in\partial T_0\Omega.
 \end{cases}
 \end{equation}
	
\end{customthm}

\begin{proof}
For any $ u\in C^{2}(\Omega)\cap C^1_0(\overline{\Omega})  $ with $ f=\mathcal{L}_{K_0}u $,
let $ \bar{u}(x')=u(T_0^{-1}x')=u(x) $ and $ \bar{f}(x')=f(T_0^{-1}x')=f(x) $ for any $ x'\in T_0\Omega $. Here we set $ x'=T_0x $. Then one computes directly that $ -\Delta \bar{u} =\bar{f}  $ in $ T_0\Omega $ and $ \bar{u}\in C^{2}(T_0\Omega)\cap C^1_0(\overline{T_0\Omega}) $. From \eqref{Green of Laplacian}, we have
	\begin{equation}\label{1011}
	\bar{u}(y')=\int_{T_0\Omega}\left (\Phi_0(x'-y')-H_{T_0\Omega}(x',y')\right )\bar{f}(x')dx',\ \ \forall y'\in T_0\Omega,
	\end{equation}
where $ H_{T_0\Omega}: T_0\Omega\times T_0\Omega\to\mathbb{R} $ satisfies \eqref{2005}.
Using coordinate transformation, \eqref{1011} yields
	\begin{equation*}
	\begin{split}
	u(y)=&\int_{\Omega}\left (\Phi_0(T_0x-T_0y)-H_{T_0\Omega}(T_0x,T_0y)\right )f(x)\cdot\det T_0dx\\
	=&\sqrt{\det K_0}^{-1}\int_{\Omega}\left (\Phi_0(T_0x-T_0y)-H_{T_0\Omega}(T_0x,T_0y)\right )f(x)dx,\ \ \forall y\in \Omega.
	\end{split}
	\end{equation*}
This implies that $ G_{K_0} $ has decomposition \eqref{2004}. The proof is thus complete.
\end{proof}

\subsection*{Acknowledgments:}

\par
D. Cao was supported by  National Key R\&D
Program (2022YFA1005602) and NNSF of China(grant No.12371212).  J. Wan was supported by NNSF of China (grant No. 12101045).

\subsection*{Conflict of interest statement} On behalf of all authors, the corresponding author states that there is no conflict of interest.

\subsection*{Data availability statement} All data generated or analysed during this study are included in this published article  and its supplementary information files.


\begin{thebibliography}{99}


\bibitem{BF80}
M.S. Berger and  L.E. Fraenkel, Nonlinear desingularization in certain free-boundary problems, \textit{Comm. Math. Phys.}, 77(1980), 149--172.

\bibitem{CF}
L.A. Caffarelli and A. Friedman, Asymptotic estimates for the plasma problem, \textit{Duke Math. J.}, 47(1980), 705--742.


\bibitem{CPY}
D. Cao, S. Peng and S. Yan, Planar vortex patch problem in incompressible steady flow, \textit{ Adv. Math.}, 270(2015), 263--301.

\bibitem{CW2}
D. Cao and J. Wan, Helical vortices with small cross-section for 3D incompressible Euler equation, \textit{J. Funct. Anal.}, 284(2023), Paper No. 109836.


\bibitem{CW}
D. Cao and J. Wan, Structure of Green's function of elliptic equations and helical vortex patches for 3D incompressible Euler equations,  \textit{Math. Ann.},
https://doi.org/10.1007/s00208-023-02589-8.



\bibitem{CW3}
D. Cao and J. Wan, Clustered helical vortices for 3D incompressible Euler equation in infinite cylinders, preprint.

\bibitem{CL}
S. Chanillo and Y. Li, Continuity of solutions of uniformly elliptic equations in $ \mathbb{R}^2 $, \textit{Manuscripta Math.}, 77(1992), 415--433.



\bibitem{DDMW}
J. D$\acute{\text{a}}$vila, M. del Pino, M. Musso and J. Wei,  Gluing methods for vortex dynamics in Euler flows, \textit{Arch. Ration. Mech. Anal.}, 235(2020), no. 3, 1467--1530.


\bibitem{De2}
J. Dekeyser and J. Van Schaftingen,  Vortex motion for the lake equations, \textit{Comm. Math. Phys.}, 375(2020), 1459--1501.

\bibitem{DV}
S. de Valeriola and J. Van Schaftingen, Desingularization of vortex rings and shallow water vortices by semilinear elliptic problem, \textit{Arch. Ration. Mech. Anal.}, 210(2)(2013), 409--450.


\bibitem{DK1}
H.J. Dong  and S. Kim,  Green's function for nondivergence elliptic operators in two dimensions, \textit{SIAM J. Math. Anal.}, 53(2021), no. 4, 4637--4656.


\bibitem{FB}
L.E. Fraenkel and M.S. Berger, A global theory of steady vortex rings in an ideal fluid, \textit{Acta Math.}, 132(1974), 13--51.

\bibitem{GT}
D. Gilbarg and N.S. Trudinger,  Elliptic Partial Differential Equations of Second Order, Classics in Mathematics, Springer, Berlin, 2001.

\bibitem{GW82}
M. Gr$\ddot{\text{u}}$ter and K.O. Widman, The Green function for uniformly elliptic equations, \textit{Manuscripta Math.}, 37(1982), 303--342.



\bibitem{KN85}
C.E. Kenig and W.-M. Ni, On the elliptic equation $Lu+k+Kexp[2u]=0$, \textit{Ann. Scuola Norm. Sup. Pisa Cl. Sci.}, 12(4)(1985), no. 2, 191--224.

\bibitem{Ye}
S. Khenissy, Y. R$\acute{e}$b$\ddot{a}$i and D. Ye, Expansion of the Green's function for divergence form operators, \textit{C. R. Acad. Sci. Paris, Ser. I}, 348(2010), 891--896.


\bibitem{Lin}
C. C. Lin, On the motion of vortices in two dimension - I. Existence of the Kirchhoff-Routh function,
\textit{Proc. Natl. Acad. Sci.} USA, 27(1941), 570--575.



\bibitem{LSW63}
W. Littman, G. Stampacchia and H.F. Weinberger,  Regular points for elliptic equations with discontinuous coefficients, \textit{Ann. Scuola Norm. Sup. Pisa Cl. Sci.}, 17(3)(1963), 43--77.


\bibitem{MM}
V. Maz'ya and R. McOwen,  On the fundamental solution of an elliptic equation in nondivergence form, Nonlinear Partial Differential Equations and Related Topics,
American Mathematical Society, Providence, RI, 2010.


\bibitem{Nor}
J. Norbury, Steady planar vortex pairs in an ideal fluid, \textit{Comm. Pure Appl. Math.}, 28(1975), 679--700.


\bibitem{R}
O. Rey, The role of the Green's function in a nonlinear elliptic equation involving the critical Sobolev exponent, \textit{J. Funct. Anal.}, 89(1)(1990), 1--52.




\bibitem{SV}
D. Smets and J. Van Schaftingen, Desingularization of vortices for the Euler equation, \textit{Arch. Ration. Mech. Anal.}, 198(3)(2010), 869--925.



\bibitem{TKB13}
J.L. Taylor, S. Kim and R.M. Brown, The Green function for elliptic systems in
two dimensions, \textit{Comm. Partial Differential Equations}, 38(9)(2013), 1574--1600.



\bibitem{T}
B. Turkington, On steady vortex flow in two dimensions. I, II,
\textit{Comm. Partial
	Differential Equations}, 8(1983), 999--1030, 1031--1071.


\bibitem{WYZ}
J. Wei, D. Ye and F. Zhou, Bubbling solutions for an anisotropic Emden--Fowler equation, \textit{Calc. Var. Partial Differential Equations}, 28(2007), 217--247.

\end{thebibliography}
\end{document}